\DeclareMathOperator{\Hom}{Hom}
\DeclareMathOperator{\Ind}{Ind}
\DeclareMathOperator{\supp}{supp}
\newtheorem{theorem}{Theorem}[section]
\newtheorem{lemma}[theorem]{Lemma}
\newtheorem{proposition}[theorem]{Proposition}
\newtheorem{corollary}[theorem]{Corollary}
\theoremstyle{definition}
\newtheorem{definition}[theorem]{Definition}
\theoremstyle{remark}
\begin{document}
\title{On Multilinear Forms for Mod $p$ Representations of $\mathrm{GL}_2(\mathbb{Q}_p)$}
\author{Yikun Fan}
\maketitle
\begin{abstract}
    Motivated by Prasad's study of trilinear forms for complex representations
    \cite{trilinear}, we investigate the space of $G$-invariant linear forms on tensor products of irreducible admissible representations of $G = \mathrm{GL}_2(\mathbb{Q}_p)$ over $\overline{\mathbb{F}}_p$. 
    Our main result is a complete vanishing theorem: 
    for any $n \ge 1$ and 
    $n$ infinite-dimensional irreducible admissible representations $\pi_1,\dots,\pi_n$
     of $G$,
    \[
    \Hom_G(\pi_1 \otimes \cdots \otimes \pi_n, \mathbbm{1}) = 0.
    \]
    A refined version holds for 
    $B^+:=\begin{pmatrix}
          p^{\mathbb{Z}} & \mathbb{Q}_p \\
          0 & 1
          \end{pmatrix}$
          -invariant forms when at least one $\pi_i$ is supersingular. 
          The proof proceeds by a detailed analysis of certain subgroups, reducing the problem from $G$ to $B^+$ and ultimately to 
          the representation theory of $\mathbb{Z}_p$.
          We also deduce partial extensions of the result to $\mathrm{GL}_2(F)$ for finite extensions $F/\mathbb{Q}_p$.
    \end{abstract}
\begin{section}{Introduction}
\end{section}
Let \( p \) be a prime number, \( F \) be a finite extension of \( \mathbb{Q}_p \), \( G := \mathrm{GL}_2(F) \), 
It was proved in \cite[Theorem 1.1, Theorem 1.2]{trilinear} that
\begin{proposition}
Let $\pi_1,\pi_2,\pi_3$ 
be three admissible irreducible
representations of $G$ over $\mathbb{C}$, then
\[\dim_{\mathbb{C}}\Hom_{G}(\pi_1\otimes\pi_2\otimes \pi_3,\mathbbm{1})\leq 1\]
\end{proposition}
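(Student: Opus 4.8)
The plan is to convert the space of trilinear forms into a branching space for $\mathrm{GL}_2$ and then compute the latter with the Kirillov model, treating supercuspidal representations separately since they are the only ones lacking a parabolic‑induction handle. First, the easy reductions: if some $\pi_i$ is finite‑dimensional, say $\pi_i=\eta\circ\det$, then moving it to the other side gives $\Hom_G(\pi_1\otimes\pi_2\otimes\pi_3,\mathbbm 1)\cong\Hom_G(\pi_j,\pi_k^\vee\otimes(\eta^{-1}\circ\det))$ for the remaining indices $\{j,k\}$, which is at most $1$‑dimensional by Schur's lemma since both sides are irreducible. So we may assume all $\pi_i$ are infinite‑dimensional, and after twisting by a character of $F^\times$ we may assume $\omega_{\pi_1}\omega_{\pi_2}\omega_{\pi_3}=\mathbbm 1$ (otherwise the center already forces vanishing). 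It then suffices to show $\dim\Hom_G(\pi_1\otimes\pi_2,\pi_3^\vee)\le 1$. We use freely the Kirillov model: writing $P=\bigl(\begin{smallmatrix}\ast&\ast\\0&1\end{smallmatrix}\bigr)$ for the mirabolic subgroup, $N=\bigl(\begin{smallmatrix}1&\ast\\0&1\end{smallmatrix}\bigr)$ and $\psi$ a fixed nontrivial additive character of $F$, every infinite‑dimensional irreducible admissible $\pi$ satisfies $\pi|_P\cong\ind_N^P\psi$; in particular $\pi|_N$ is carried by $C_c^\infty(F^\times)$ with $n(x)$ acting at $y\in F^\times$ through $x\mapsto\psi(xy)$, so $\pi$ has a one‑dimensional space of Whittaker functionals.

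\emph{The non‑supercuspidal case.} Since the tensor product is symmetric in its factors, suppose $\pi_3$ (hence $\pi_3^\vee$) is not supercuspidal; then $\pi_3^\vee$ embeds into a normalized principal series $\Ind_B^G\chi$. By left‑exactness and Frobenius reciprocity,
\[
\Hom_G(\pi_1\otimes\pi_2,\pi_3^\vee)\hookrightarrow\Hom_G(\pi_1\otimes\pi_2,\Ind_B^G\chi)\cong\Hom_B\bigl((\pi_1\otimes\pi_2)|_B,\mu\bigr),
\]
where $\mu=\chi\delta_B^{1/2}$ is a character of $B$, trivial on $N$. Restricting the $B$‑equivariance to $P$ and using $\pi_1|_P\cong\ind_N^P\psi$ together with the projection formula $(\ind_N^P\psi)\otimes\pi_2|_P\cong\ind_N^P(\psi\otimes\pi_2|_N)$, the last space injects into $\Hom_P\bigl(\ind_N^P(\psi\otimes\pi_2|_N),\mu|_P\bigr)$, which after absorbing the $N$‑trivial twist $\mu|_P$ and applying Shapiro's lemma for coinvariants equals $\bigl((\psi\otimes\pi_2|_N)_N\bigr)^{\ast}$. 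By the Kirillov description of $\pi_2|_N$, the $N$ acts at $y\in F^\times$ on $\psi\otimes\pi_2|_N$ through the character of parameter $1+y$, so its $N$‑coinvariants are the fibre of $C_c^\infty(F^\times)$ at the single point $y=-1$, hence one‑dimensional. Therefore $\dim\Hom_G(\pi_1\otimes\pi_2,\pi_3^\vee)\le 1$ in this case.

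\emph{The supercuspidal case, and the main obstacle.} Now suppose all of $\pi_1,\pi_2,\pi_3$ are supercuspidal; then $\pi_3^\vee$ is a subquotient of no principal series, so the argument above has no starting point — this is the crux. I would instead exploit the Kirillov model of $\pi_3^\vee$, which for supercuspidal $\pi_3$ is all of $C_c^\infty(F^\times)$: a $G$‑homomorphism $T\colon\pi_1\otimes\pi_2\to\pi_3^\vee$ is determined by the functional $\lambda\colon\xi\mapsto(T\xi)(1)$, which is a $\psi$‑Whittaker functional for the diagonal $N$ on $\pi_1\otimes\pi_2$, because $(T\xi)(y)=\lambda\bigl((\pi_1\otimes\pi_2)(\mathrm{diag}(y,1))\xi\bigr)$ recovers $T$ from $\lambda$. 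Via the Kirillov models of $\pi_1$ and $\pi_2$ the $\psi$‑Whittaker functionals on $\pi_1\otimes\pi_2$ form the space of distributions on the curve $\{(y_1,y_2):y_1+y_2=1\}\cap(F^\times)^2\cong F\setminus\{0,1\}$; since $\pi_1,\pi_2$ are supercuspidal their Kirillov functions are compactly supported in $F^\times$, so $T\xi$ automatically lands in $C_c^\infty(F^\times)$, and the single remaining constraint on $\lambda$ is that $T$ intertwine the Weyl element $w$. Expressing the $w$‑action on the Kirillov models through the Bessel distributions of the $\pi_i$ turns this into an integral identity for $\lambda$, and I expect that unwinding it pins $\lambda$ down up to a scalar, giving the bound. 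The genuine difficulty is exactly this step: because a supercuspidal representation carries no inductive structure from parabolic subgroups, one must analyze the Weyl‑element functional equation head‑on, which requires understanding the $w$‑action on the Kirillov model — equivalently the Bessel distributions — precisely enough to see that its solution space is at most one‑dimensional. Everything delicate about the theorem is concentrated here.
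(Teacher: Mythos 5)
The paper does not supply a proof of this proposition; it is quoted verbatim from Prasad \cite{trilinear} as background, so there is nothing in the paper to compare against. Judged on its own merits, your proposal reflects the general strategy behind Prasad's argument (reduce to a branching problem, exploit the mirabolic restriction and the Kirillov model), but it is not a complete proof, for two separate reasons.

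First, in the non-supercuspidal case you assert that every infinite-dimensional irreducible admissible $\pi$ satisfies $\pi|_P\cong\ind_N^P\psi$. This is only true when $\pi$ is supercuspidal. For an irreducible principal series or a special representation, $\ind_N^P\psi$ is a \emph{proper} $P$-subrepresentation of $\pi|_P$ whose quotient is the (twisted) Jacquet module, of dimension two or one respectively. Consequently the restriction map from $\Hom_P\bigl(\pi_1\otimes\pi_2,\mu|_P\bigr)$ to $\Hom_P\bigl((\ind_N^P\psi)\otimes\pi_2,\mu|_P\bigr)$ need not be injective when $\pi_1$ is not supercuspidal: its kernel consists of forms factoring through the finite-dimensional Jacquet quotient of $\pi_1$, and when $\pi_2$ is also non-supercuspidal these can genuinely be nonzero. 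Likewise your Shapiro-lemma computation of the $N$-coinvariants of $\psi\otimes\pi_2|_N$ silently identifies $\pi_2|_N$ with $C_c^\infty(F^\times)$, which again fails off the supercuspidal locus. Accounting for these ``degenerate'' contributions is exactly the delicate part of Prasad's analysis (Mackey theory on the Bruhat cells of $B\backslash G/B$), and the one-line Kirillov computation does not address them.

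Second, the supercuspidal case is simply not proved. You end with ``I expect that unwinding it pins $\lambda$ down up to a scalar,'' which is a statement of hope, not an argument. You correctly identify that the constraint imposed by the Weyl element on a $\psi$-Whittaker functional of $\pi_1\otimes\pi_2$ is where all the difficulty lies, but you do not show that the resulting functional equation has an at-most-one-dimensional solution space, nor is it clear that the Bessel-distribution route yields this in any short way. Prasad handles this case by entirely different methods, and this is the step that would need a real argument for your proposal to become a proof.
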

Inspired by this result, 
the main purpose of this note is to study a generalized version of the analogous question
 in
 $mod\ p$ setting. Because of the significant differences between the 
 $mod\ p$ and complex representation theories of $G$, 
 both the answer and the proof are completely different from \cite{trilinear}.
\par From now, all representations, 
without further explanation, are over the algebraically closed field \( k := \overline{\mathbb{F}}_p \).
 We first recall that, 
it was due to \cite{classification} that
the irreducible admissible $mod\ p$ representations of $G$ are classified as
    \begin{enumerate}
        \item principal series $\operatorname{Ind}_B^G\left(\theta_1 \otimes \theta_2\right)$ with $\theta_1 \neq \theta_2$
        \item $1$-dimensional representations $\theta \circ \operatorname{det}$
        \item special series $\mathrm{Sp} \otimes(\theta \circ \mathrm{det})$
        \item supersingular representations
    \end{enumerate}
    where $\mathrm{Sp}$ is defined by the exact sequence 
    \begin{equation*}
        0\to \mathbbm{1}\to \operatorname{Ind}_B^G\mathbbm{1}\to \mathrm{Sp}\to 0
    \end{equation*}
     and $\theta, \theta_1, \theta_2$ denote smooth characters $F^* \rightarrow k^*$.
    There are no non-trivial isomorphisms between representations of different types.
    \par Armed with this classification, we can now state our main results, which demonstrate a fundamental contrast with the complex case by establishing a universal vanishing phenomenon.
    \begin{theorem}\label{ssB}
        Suppose that $F = \mathbb{Q}_p$, and let $\pi_1$ be a supersingular representation, 
        $\pi_2$ be a smooth representation of 
        $G = \mathrm{GL}_2(\mathbb{Q}_p)$ over $\overline{\mathbb{F}}_p$. Define
     \[B^+ := \left\{ \begin{pmatrix}
         x & y \\
         0 & 1
         \end{pmatrix} \,\middle|\, x\in p^{\mathbb{Z}},\ y \in \mathbb{Q}_p \right\}.
         \] 
     Then
    \begin{displaymath}
        \Hom_{B^{+}}(\pi_1\otimes\pi_2,\mathbbm{1})=0
    \end{displaymath}
    \end{theorem}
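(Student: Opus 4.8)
The plan is to pass to coinvariants and reduce everything to a statement about the compact group $\begin{pmatrix} 1 & \mathbb{Z}_p \\ 0 & 1\end{pmatrix}\cong\mathbb{Z}_p$. Write $N := \begin{pmatrix} 1 & \mathbb{Q}_p \\ 0 & 1\end{pmatrix}$, $N_0 := \begin{pmatrix} 1 & \mathbb{Z}_p \\ 0 & 1\end{pmatrix}$ and $t := \begin{pmatrix} p & 0 \\ 0 & 1\end{pmatrix}$; then $N$ is normal in $B^+$ with $B^+/N$ generated by the image of $t$, hence $B^+/N\cong\mathbb{Z}$. For any smooth $\overline{\mathbb{F}}_p[H]$-module $V$ one has $\Hom_H(V,\mathbbm{1}) = (V_H)^*$, where $V_H$ is the space of $H$-coinvariants, and coinvariants compute in stages along $N\triangleleft B^+$. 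Thus the theorem amounts to
\[
\big((\pi_1\otimes\pi_2)_N\big)_{\langle t\rangle} = 0 .
\]
I will aim for the formally stronger assertion $(\pi_1\otimes\pi_2)_{N_0}=0$, which suffices since $(\pi_1\otimes\pi_2)_N$, and then its $\langle t\rangle$-coinvariants, are quotients of $(\pi_1\otimes\pi_2)_{N_0}$; I record at the end what to do if only a bound on the $N$-coinvariants is available, which is the point at which the residual $\mathbb{Z}\cong\langle t\rangle$-action must genuinely be used.

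The key formal ingredient is a lemma about $N_0\cong\mathbb{Z}_p$. Fix a topological generator $\gamma$; for a smooth $\overline{\mathbb{F}}_p$-representation $A$ of $N_0$ one has $A_{N_0}=A/(1-\gamma)A$, so $A_{N_0}=0$ precisely when $1-\gamma$ is surjective on $A$. The claim is:
\[
A_{N_0}=0 \ \Longrightarrow\ (A\otimes_{\overline{\mathbb{F}}_p}B)_{N_0}=0 \quad\text{for every smooth }\overline{\mathbb{F}}_p[N_0]\text{-module }B .
\]
To prove it I show $1-\gamma$ is surjective on $A\otimes B$: it suffices to reach each pure tensor $a\otimes b$. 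As $B$ is smooth and we are in characteristic $p$, $b$ is killed by $(1-\gamma)^{p^n}=1-\gamma^{p^n}$ for some $n$; let $m$ be minimal with $(1-\gamma)^m b=0$ and induct on $m$. Choose $a'$ with $a=(1-\gamma)a'$; a direct computation gives
\[
(1-\gamma)(a'\otimes b) \;=\; a\otimes b \;+\; (\gamma a')\otimes\big((1-\gamma)b\big),
\]
and $(1-\gamma)b$ is killed by $(1-\gamma)^{m-1}$, so the last term lies in $(1-\gamma)(A\otimes B)$ by the inductive hypothesis; hence $a\otimes b$ does too. The base case $m\le 1$ (so $b$ is $\gamma$-fixed, or zero) is the same computation with the last term vanishing.

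The heart of the argument is the structural input that $(\pi_1)_{N_0}=0$ for a supersingular representation $\pi_1$ of $\mathrm{GL}_2(\mathbb{Q}_p)$; dually, this says the compact $\overline{\mathbb{F}}_p[[N_0]]$-module $\pi_1^\vee$ has no $(\gamma-1)$-torsion, or equivalently that $\pi_1|_{N_0}$ is injective in the category of smooth $\overline{\mathbb{F}}_p$-representations of $N_0$. Here both hypotheses are essential: over $\mathbb{Q}_p$ every supersingular $\pi_1$ is of the form $\operatorname{c-Ind}_{KZ}^G\mathrm{Sym}^r/(T)$ with $T$ the single spherical Hecke operator and $0\le r\le p-1$ (Barthel–Livné, Breuil), and for such $\pi_1$ the module $\pi_1^\vee$ is free of finite rank over $\overline{\mathbb{F}}_p[[N_0]]$ — this can be read off from Breuil's description of $\pi_1$ on the Bruhat–Tits tree, or from Colmez's $(\varphi,\Gamma)$-module functor, where the associated module has finite rank over $\overline{\mathbb{F}}_p((X))$ with $\pi_1^\vee$ a free $\overline{\mathbb{F}}_p[[X]]$-lattice inside it. A free module is torsion-free, so $(\pi_1)_{N_0}=0$; applying the lemma with $A=\pi_1$, $B=\pi_2$ yields $(\pi_1\otimes\pi_2)_{N_0}=0$, hence $(\pi_1\otimes\pi_2)_{B^+}=0$ and $\Hom_{B^+}(\pi_1\otimes\pi_2,\mathbbm{1})=0$.

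I expect the structural input to be the main obstacle: one must verify the freeness of $\pi_1^\vee$ over $\overline{\mathbb{F}}_p[[N_0]]$ uniformly in $r$ and $p$, tracking the normalizations in Breuil's tree computation or in Colmez's construction. It is genuinely a supersingular phenomenon — for principal series, one-dimensional, and special representations $\pi_1^\vee$ carries nonzero $(\gamma-1)$-torsion (a nonzero ordinary part), so for such $\pi_1$ one cannot hope that $(\pi_1\otimes\pi_2)_{N_0}$, or even the $B^+$-coinvariants, vanish in general. If the structural input is only available in the weaker form that $(\pi_1)_{N_0}$ is finite-dimensional, one instead writes $(\pi_1\otimes\pi_2)_N=\varinjlim_j(\pi_1\otimes\pi_2)_{t^{-j}N_0t^{\,j}}$ and uses that conjugation by $t$ contracts $N_0$ to collapse this colimit before taking $\langle t\rangle$-coinvariants — precisely the promised further reduction to the representation theory of $\mathbb{Z}_p$.
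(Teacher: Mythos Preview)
Your plan is essentially the paper's: both arguments reduce to the same $\mathbb{Z}_p$-lemma --- if $\Delta V_1=V_1$ (equivalently $(V_1)_{N_0}=0$) then $\Delta(V_1\otimes V_2)=V_1\otimes V_2$ --- and your inductive proof of it, via the minimal $m$ with $(1-\gamma)^m b=0$, is exactly the paper's induction on the filtration $\ker(\Delta^n)$ of $V_2$.

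The packaging differs in an instructive way. The paper does \emph{not} establish $(\pi_1)_{N_0}=0$ directly. It introduces a ``vanishing pair'' $(\pi_1,W)$: a $U_1$-stable subspace $W$ with $\Delta W=W$ and $\pi_1=\sum_n t^{\pm n}W$, then uses $B^+$-equivariance of $\phi$ to reduce to $\phi|_{W\otimes\pi_2}$, which vanishes by the $\mathbb{Z}_p$-lemma. For supersingular $\pi_1$ the subspace is $M_\pi=M_\sigma\oplus M_{\tilde\sigma}$ from Pa\v{s}k\={u}nas, each summand having one-dimensional $U_0$-invariants; then $\Delta M_\pi=M_\pi$ by the linear-algebra criterion (a single Jordan block forces $\Delta$ surjective). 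From this one can indeed deduce your stronger claim $(\pi_1)_{N_0}=0$ via the relation $t\Delta=\Delta^{\,p}t$, so $\Delta(t^{-n}M_\pi)=t^{-n}\Delta^{p^n}M_\pi=t^{-n}M_\pi$; but the paper never needs to. The payoff of the paper's weaker packaging is that it also handles principal and special series, where --- as you correctly note --- $(\pi)_{N_0}\neq 0$ but a vanishing pair still exists inside $\Ind_B^{BsB}\chi$.

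Your justification of the structural input, however, contains an error. The module $\pi_1^\vee$ is \emph{not} free of finite rank over $\overline{\mathbb{F}}_p[[N_0]]$: already $t^{-n}v_\sigma\in\pi_1^{N_0}$ for every $n\ge 0$ (since $u(a)t^{-n}=t^{-n}u(p^na)$), so $\pi_1^{N_0}$ is infinite-dimensional. Nor is $\pi_1^\vee$ a lattice inside Colmez's $(\varphi,\Gamma)$-module; the latter is a genuine quotient/limit construction and is much smaller than $\pi_1^\vee$. What you need is only that $\pi_1^\vee$ is $(\gamma-1)$-\emph{torsion-free}, and the cleanest route to that is the Pa\v{s}k\={u}nas input the paper uses, not Colmez.
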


\begin{theorem}\label{gens}
    Let $F$ be a finite extension of  
    $\mathbb{Q}_p$, $n\in\mathbb{N}_{>0}$ and $\pi_1,\cdots ,\pi_n$ be $n$ 
    irreducible admissible representations of 
    $G=GL_2(F)$ over $\overline{\mathbb{F}}_p$, 
     each one is either a principal series or a 
special series. Then
\begin{displaymath}
    \Hom_{G}(\bigotimes_{i=1}^n\pi_i,\mathbbm{1})=0
\end{displaymath}
\end{theorem}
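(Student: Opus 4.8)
The plan is to reduce to representations induced from the Borel, to bound the space of $G$-invariant forms by that of $B$-invariant ones, to compute the latter completely, and to eliminate the single nonzero case by the transitivity of $G$ on the flag variety. Each $\pi_i$ is a quotient of $\Ind_B^G(\chi_i)$ for a suitable smooth character $\chi_i$ of the diagonal torus $T$: if $\pi_i$ is a principal series this is an equality, while if $\pi_i=\mathrm{Sp}\otimes(\theta\circ\det)$ then $\Ind_B^G(\theta\otimes\theta)=\Ind_B^G(\mathbbm 1)\otimes(\theta\circ\det)$ surjects onto it through the defining sequence $0\to\mathbbm 1\to\Ind_B^G\mathbbm 1\to\mathrm{Sp}\to 0$. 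Since a tensor product of surjections of vector spaces is a surjection, $\bigotimes_{i=1}^n\Ind_B^G(\chi_i)$ surjects onto $\bigotimes_{i=1}^n\pi_i$, and precomposition reduces the theorem to the statement that $\Hom_G(\bigotimes_{i=1}^n\Ind_B^G(\chi_i),\mathbbm 1)=0$ for arbitrary smooth characters $\chi_1,\dots,\chi_n$ of $T$. Restricting forms to $B$ embeds this space into $\Hom_B(\bigotimes_i\Ind_B^G(\chi_i)|_B,\mathbbm 1)$, which by the universal property of coinvariants is the linear dual of $(\bigotimes_i\Ind_B^G(\chi_i))_B$; so it remains to compute this coinvariant space, writing $(-)_B$, resp.\ $(-)_N$, for the functor of $B$-, resp.\ $N$-, coinvariants, where $N\cong F$ is the unipotent radical of $B$.

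The geometric input is the Bruhat decomposition $\mathbb{P}^1:=B\backslash G=\{x_0\}\sqcup\Omega$ into the $B$-fixed point $x_0$ and the open cell $\Omega\cong N$. It gives, for each $i$, a $B$-stable exact sequence $0\to M_i\to\Ind_B^G(\chi_i)|_B\to\chi_i\to 0$, where $\chi_i$ is the fibre at $x_0$ (a character of $T$ inflated to $B$, hence trivial on $N$) and $M_i$ is the space of sections compactly supported on $\Omega$; restricted to $N$ the line bundle over $\Omega\cong N$ is equivariantly trivial, so $M_i|_N\cong\mathcal{C}^\infty_c(F)$ with $N=F$ acting by translation. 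The arithmetic input — the one place the residue characteristic enters — is that $(\mathcal{C}^\infty_c(F)\otimes X)_F=0$ for \emph{every} smooth $\overline{\mathbb{F}}_p$-representation $X$ of the additive group $F$, with $F$ acting diagonally by translation: fixing $x\in X$ and passing to a subgroup of $F$ small enough to fix $x$, the indicator of a ball is, modulo the augmentation relations, equal to $q$ times the indicator of any of its $q$ sub-balls at the next level, and $q=0$ in $\overline{\mathbb{F}}_p$; so every ball indicator tensored with every $x$, hence all of $\mathcal{C}^\infty_c(F)\otimes X$, lies in the augmentation submodule. Tensoring the $n$ Bruhat sequences and using right exactness of $(-)_N$, every graded piece $\bigotimes_i A_i$ with $A_i\in\{M_i,\chi_i\}$ containing at least one factor $M_j$ has vanishing $N$-coinvariants; hence $(\bigotimes_i\Ind_B^G(\chi_i))_N\cong(\bigotimes_i\chi_i)_N$, a one-dimensional $T=B/N$-module. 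Taking $T$-coinvariants, $(\bigotimes_i\Ind_B^G(\chi_i))_B$ is one-dimensional when $\prod_i\chi_i=\mathbbm 1$ and is $0$ otherwise.

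If $\prod_i\chi_i\neq\mathbbm 1$ then already $\Hom_B(\bigotimes_i\Ind_B^G(\chi_i)|_B,\mathbbm 1)=0$, so $\Hom_G=0$. If $\prod_i\chi_i=\mathbbm 1$, the space of $B$-invariant forms is one-dimensional, spanned by $\ell_0\colon f_1\otimes\cdots\otimes f_n\mapsto\prod_i f_i(x_0)$, the tensor product of the evaluations at $x_0$ (each $B$-equivariant because $B$ fixes $x_0$). But $G$ is generated by $B$ together with a representative $w$ of the nontrivial Weyl element, and $\ell_0(w\cdot(f_1\otimes\cdots\otimes f_n))=\prod_i f_i(y_0)$ for the point $y_0\neq x_0$ obtained by translating $x_0$ by $w$ (distinct from $x_0$ since $w\notin B=\Stab_G(x_0)$); choosing each $f_i$ to equal $1$ at $x_0$ and $0$ at $y_0$ — possible as the points differ — shows $\ell_0$ is not $w$-invariant, hence not $G$-invariant. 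Therefore $\Hom_G(\bigotimes_{i=1}^n\pi_i,\mathbbm 1)=0$. I expect the genuine obstacle to lie exactly in this last case: it cannot be disposed of by a vanishing of coinvariants, and one must instead pin down the unique $B$-invariant form concretely and produce a group element that destroys it. A secondary matter needing care is that the vanishing of $N$-coinvariants propagates from the $2^n$ graded pieces of the tensored Bruhat filtration to the whole tensor product using only the right exactness of $H_0(N,-)$, together with the identification $M_i|_N\cong\mathcal{C}^\infty_c(F)$.
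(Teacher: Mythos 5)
Your proposal is correct, and it takes a genuinely different route from the paper.

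The paper proves \cref{gens} by reducing to \cref{strong} and then inducting on $n$: at each step it peels off $\Ind_B^G\chi_{m+1}$ via the filtration $0\to\Ind_B^{BsB}\chi_{m+1}\to\Ind_B^G\chi_{m+1}\to\chi_{m+1}\to 0$ restricted to $B^+$, kills the big-cell sub-piece with the Vanishing Lemma (\cref{van}), and handles the $\chi_{m+1}$-quotient by the induction hypothesis. The Vanishing Lemma itself is built from the $\Delta$-operator machinery on the $\mathbb{Z}_p$-subgroup $U_1$ (\cref{zprep}, \cref{trad}, \cref{sequ}) together with the covering condition $\pi=\sum_n t^n W$ from \cref{vandef}; the nilpotency of $\Delta$ on smooth $\mathbb{Z}_p$-modules in characteristic $p$ is where the residue characteristic enters. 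You instead compute $(\bigotimes_i\Ind_B^G\chi_i)_B$ in one stroke: the Bruhat filtration $0\to M_i\to\Ind_B^G\chi_i\to\chi_i\to 0$, the identification $M_i|_N\cong\mathcal C_c^\infty(F)$, and the vanishing $(\mathcal C_c^\infty(F)\otimes X)_F=0$ (your ball-refinement argument, where $q\equiv 0$ in $k$ does the same job as the nilpotency of $\Delta$) reduce the $N$-coinvariants to $\bigotimes_i\chi_i$, hence the $B$-coinvariants to a line exactly when $\prod_i\chi_i=\mathbbm{1}$; you then kill that last line by evaluating the explicit $B$-invariant form at the Weyl translate $y_0=x_0 s$. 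Your route is shorter and more transparent for this theorem, avoids $B^+$ and the induction entirely, and works uniformly over any $F$; the paper's heavier vanishing-pair framework is engineered so that the same mechanism also applies to supersingular representations of $\mathrm{GL}_2(\mathbb{Q}_p)$ in \cref{ssB}, where there is no Bruhat-cell filtration to exploit. One small presentational point: your passage from "every graded piece with an $M_j$ factor has zero $N$-coinvariants" to $(\bigotimes_i\Ind_B^G\chi_i)_N\cong(\bigotimes_i\chi_i)_N$ is cleanest if phrased as: the kernel $K$ of $\bigotimes_i\Ind_B^G\chi_i\twoheadrightarrow\bigotimes_i\chi_i$ is a quotient of $\bigoplus_j\bigl(\Ind_B^G\chi_1\otimes\cdots\otimes M_j\otimes\cdots\otimes\Ind_B^G\chi_n\bigr)$, each summand having vanishing $N$-coinvariants, so $K_N=0$ by right exactness and the long sequence $K_N\to(\bigotimes_i\Ind_B^G\chi_i)_N\to(\bigotimes_i\chi_i)_N\to 0$ forces the claimed isomorphism.
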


As an immediate consequence of the preceding theorems and the classification of irreducible admissible representations, we obtain:

\begin{corollary}\label{pmain}
    Suppose that $F=\mathbb{Q}_p$, $n\in\mathbb{N}_{>0}$ and $\pi_1,\pi_2,\pi_3,\cdots ,\pi_n$ are $n$ 
    infinite-dimensional admissible irreducible representations of 
    $G=\mathrm{GL}_2(\mathbb{Q}_p)$ over $\overline{\mathbb{F}}_p$. Then
\begin{displaymath}
\Hom_G(\bigotimes_{i=1}^n\pi_i,\mathbbm{1})=0
\end{displaymath}
\end{corollary}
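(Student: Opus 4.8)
The plan is to deduce \Cref{pmain} formally from \Cref{ssB} and \Cref{gens}, splitting into two cases according to the types of the $\pi_i$. First I would recall that, by the classification stated in the introduction, an infinite-dimensional irreducible admissible representation of $G$ over $k$ is either a principal series, a special series, or a supersingular representation: the one-dimensional representations $\theta\circ\det$, being the only finite-dimensional irreducibles, are excluded by hypothesis. Hence each $\pi_i$ is of one of these three types.

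Suppose first that none of the $\pi_i$ is supersingular. Then every $\pi_i$ is a principal series or a special series, and since $F=\mathbb{Q}_p$ one applies \Cref{gens} directly to obtain $\Hom_G(\bigotimes_{i=1}^n\pi_i,\mathbbm{1})=0$. Suppose instead that some $\pi_i$ is supersingular; after reindexing, say $\pi_1$ is supersingular. Set $\sigma:=\bigotimes_{i=2}^n\pi_i$, equipped with the diagonal $G$-action, with the convention that $\sigma=\mathbbm{1}$ when $n=1$. Since each $\pi_i$ is smooth and a tensor product over $k$ of smooth representations is again smooth — a pure tensor $v_2\otimes\cdots\otimes v_n$, with each $v_i$ fixed by a compact open subgroup $K_i$, is fixed by the compact open subgroup $\bigcap_{i\ge 2}K_i$ — the representation $\sigma$ is a smooth representation of $G$. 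Thus \Cref{ssB} applies and gives $\Hom_{B^+}(\pi_1\otimes\sigma,\mathbbm{1})=0$. As $B^+\subseteq G$, every $G$-invariant linear form on $\bigotimes_{i=1}^n\pi_i=\pi_1\otimes\sigma$ is a fortiori $B^+$-invariant, so restriction of linear forms yields an inclusion $\Hom_G(\bigotimes_{i=1}^n\pi_i,\mathbbm{1})\hookrightarrow\Hom_{B^+}(\pi_1\otimes\sigma,\mathbbm{1})=0$, and the corollary follows.

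I do not expect a genuine obstacle here, since all the substance is already contained in \Cref{ssB} and \Cref{gens}. The only points that need care are bookkeeping: that \Cref{ssB} controls a priori only $B^+$-invariant forms, which is harmless because $B^+$-invariance is weaker than $G$-invariance; that \Cref{ssB} is phrased for a pair consisting of a supersingular representation and a \emph{single} smooth representation, so the remaining $n-1$ tensor factors must first be collapsed into the one smooth representation $\sigma$ (including the degenerate case $n=1$, where $\sigma=\mathbbm{1}$); and that the hypothesis ``infinite-dimensional'' is exactly what removes the characters $\theta\circ\det$ from the list of possibilities for the $\pi_i$, so that the two theorems between them cover every case.
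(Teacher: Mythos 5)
Your proof is correct and coincides with what the paper intends: the corollary is stated as an ``immediate consequence'' of Theorems~\ref{ssB} and~\ref{gens} together with the classification, and your two-case split (no supersingular factor versus at least one supersingular factor, with the latter reduced to Theorem~\ref{ssB} via the inclusion $\Hom_G\hookrightarrow\Hom_{B^+}$ and the observation that $\bigotimes_{i\ge 2}\pi_i$ is smooth) is exactly that deduction, with the bookkeeping about the $n=1$ case and the exclusion of $\theta\circ\det$ correctly handled.
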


 Here we summarize the content of each sections.
In \cref{not}, we fix the notations and some other 
preliminaries used in this note.
\par In \cref{vansec}, 
we prove several key lemmas about the $mod\ p$ representation theory of $\mathbb{Z}_p$ 
via purely linear algebraical discussions, and
gives the explicit definition (\cref{vandef}) of 
\emph{vanishing representation} of $B^+$. 
By reducing to $\mathbb{Z}_p$ representation theory, we establish

\begin{lemma}[Vanishing Lemma]\label{van}
    Let $\pi_1$ be a vanishing representation of 
    $B^+$, $\pi_2$ be a smooth representation of $B^+$ 
    and $\chi:T\to \overline{\mathbb{F}}_p^*$ be a character, 
     then 
    \begin{align*}\Hom_{B^+}(\pi_1\otimes \pi_2,\chi)=0
    \end{align*}
\end{lemma}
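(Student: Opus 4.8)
My plan is to push the statement through a tensor--hom adjunction, absorbing the smoothness of $\pi_2$ completely, and then to invoke the defining property of a vanishing representation together with the $\mathbb{Z}_p$-lemmas of \cref{vansec}. First I would rewrite the Hom-space: since $\chi$ factors through $T \cong B^+/N$ it is trivial on the unipotent part $N = \left(\begin{smallmatrix}1 & \mathbb{Q}_p \\ 0 & 1\end{smallmatrix}\right)$, and the adjunction identifies $\Hom_{B^+}(\pi_1 \otimes \pi_2, \chi)$ with $\Hom_{B^+}\!\big(\pi_2,\, \Hom_k(\pi_1,\chi)\big)$, where $\Hom_k(\pi_1,\chi) \cong \pi_1^{*} \otimes \chi$ carries the contragredient $B^+$-action. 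Because $\pi_2$ is smooth, any $B^+$-equivariant map out of $\pi_2$ has image in the smooth vectors of the target, so the right-hand side equals $\Hom_{B^+}(\pi_2,\, \pi_1^{\vee} \otimes \chi)$, with $\pi_1^{\vee}$ the smooth dual of $\pi_1$ regarded as a $B^+$-representation. Hence it is enough to show $\pi_1^{\vee} = 0$.

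Next I would reduce $\pi_1^{\vee} = 0$ to a statement about $\mathbb{Z}_p$. Put $N_0 = \left(\begin{smallmatrix}1 & \mathbb{Z}_p \\ 0 & 1\end{smallmatrix}\right) \cong \mathbb{Z}_p$ and $s = \left(\begin{smallmatrix}p & 0 \\ 0 & 1\end{smallmatrix}\right)$. Since $B^+ \cong \mathbb{Q}_p \rtimes \mathbb{Z}$ with the $\mathbb{Z}$-factor discrete, every compact open subgroup of $B^+$ has finite, hence trivial, image in $B^+/N$, so lies in $N$ and is therefore one of the groups $s^{-m} N_0 s^{m} = \left(\begin{smallmatrix}1 & p^{-m}\mathbb{Z}_p \\ 0 & 1\end{smallmatrix}\right)$, $m \in \mathbb{Z}$; consequently $\pi_1^{\vee} = \bigcup_{m \in \mathbb{Z}} \big((\pi_1)_{s^{-m} N_0 s^{m}}\big)^{*}$, so $\pi_1^{\vee} = 0$ precisely when all of these $N_0$-coinvariant spaces (at every level $m$, which conjugation by $s$ permutes) vanish. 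By \cref{vandef} a vanishing representation is one whose restriction to $N_0 \cong \mathbb{Z}_p$, together with the residual action of $s$, is exactly of the form handled by the key lemmas of \cref{vansec}; those lemmas then give that each such coinvariant space is $0$. Therefore $\pi_1^{\vee} = 0$ and the Vanishing Lemma follows.

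I expect essentially all of the difficulty to sit in the implication ``vanishing representation $\Rightarrow$ every $N_0$-coinvariant space vanishes,'' which is precisely what the preliminary $\mathbb{Z}_p$-analysis is for. A single compact open $N_0$ only remembers $\pi_1$ as a smooth representation of $\mathbb{Z}_p$, and smoothness alone is far from enough: for any nonzero admissible representation the smooth dual is nonzero. What makes the argument go is that \cref{vandef} controls all the rescaled restrictions $\pi_1|_{s^{-m} N_0 s^{m}}$ simultaneously through the contracting/expanding element $s$, after which the purely linear-algebraic lemmas of \cref{vansec} about $\mathbb{Z}_p$-representations kill every coinvariant. By contrast the adjunction step is formal; its only purpose is to quarantine the hypothesis on $\pi_2$ so that nothing beyond the structure of $\pi_1$ is needed.
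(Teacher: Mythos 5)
Your approach is genuinely different from the paper's: you push the problem through the tensor--hom adjunction to eliminate $\pi_2$ entirely and reduce to the vanishing of the smooth $B^+$-dual $\pi_1^\vee$, whereas the paper tensors with $\pi_2$ first and shows that restriction $\Hom_{B^+}(\pi_1\otimes\pi_2,\chi)\to\Hom_{U_1}(W_1\otimes\pi_2,\mathbbm{1})$ is injective with target zero via \cref{zprep}. The formal steps you carry out are correct: the adjunction identity, the observation that smoothness of $\pi_2$ forces the image into smooth vectors, the identification of the compact open subgroups of $B^+$ with the groups $t^{-m}N_0t^{m}\cong p^{-m}\mathbb{Z}_p$, and the reduction of $\pi_1^\vee=0$ to the vanishing of all coinvariants $(\pi_1)_{p^{-m}\mathbb{Z}_p}$.

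However, there is a genuine gap exactly where you flag the difficulty. You assert that ``those lemmas [of \cref{vansec}] then give that each such coinvariant space is $0$,'' but none of \cref{zprep}, \cref{trad}, \cref{sequ} says anything about the $t$-action or about passing from $\Delta W=W$ on the subspace $W$ to $\Delta\pi_1=\pi_1$ on all of $\pi_1=\sum_{n\ge 0}t^{n}W$; they are purely statements about $\mathbb{Z}_p$-representations. The definition of a vanishing pair controls only $W$, not $\pi_1$ restricted to $N_0$, and a smooth representation with $\Delta W=W$ for some subspace need not a priori have vanishing coinvariants. The bridge you need — and which your proposal omits — is the computation: for $w'\in W$ choose $w\in W$ with $\Delta w=w'$; then, using the relation $[p^{n}]\,t^{n}=t^{n}\,[1]$ in $B^+$ and the characteristic-$p$ identity $\Delta^{p^{n}}=([1]-1)^{p^{n}}=[p^{n}]-1$, one gets
\[
\Delta^{p^{n}}\bigl(t^{n}w\bigr)=\bigl([p^{n}]-1\bigr)t^{n}w=t^{n}\bigl([1]-1\bigr)w=t^{n}\Delta w=t^{n}w',
\]
so $t^{n}W\subseteq\Delta^{p^{n}}\pi_1\subseteq\Delta\pi_1$, hence $\Delta\pi_1=\pi_1$ and therefore $(\pi_1)_{p^{-m}\mathbb{Z}_p}=0$ for every $m$. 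With this inserted your argument closes; without it, the claim $\pi_1^\vee=0$ is not justified. Note also that once this is supplied, your route actually proves something slightly stronger than the paper's proof explicitly records (the smooth $B^+$-dual of a vanishing representation is zero), though at the cost of a commutation-relation computation that the paper's direct restriction argument sidesteps by only ever working with $U_1=N_0$ and not with its $t$-conjugates.
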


 which helps us
reduce the proof of \cref{ssB} to demonstrating that 
supersingular representations of $\mathrm{GL}_2(\mathbb{Q}_p)$ are vanishing as $B^+$-representation.
\par In \cref{cons}, we find an explicit vanishing 
$B^+$-subrepresentation in
supersingular representations, principal series, and lift of special series 
respectively. 
\par For $\pi$ a principal series or special series, 
$\pi$ is a quotient of $\Ind_B^G(\theta_1\otimes\theta_2)$ for some characters 
$\theta_1,\theta_2:F^*\to k^*$, 
we prove that the $B^+$-subrepresentation
\[
\bigl\{\,f\in \Ind_B^G(\theta_1\otimes\theta_2) \mid \supp(f)\subseteq B
\begin{pmatrix}
    0 & 1
    \\ 1 & 0
\end{pmatrix}
B \bigr\}
\]
of $\Ind_B^G(\theta_1\otimes\theta_2)$ is vanishing.
\par For a supersingular representation $\pi$,
 we 
 find a subspace $M_{\pi}$ of $\pi$ such that 
 $(\pi,M_{\pi})$ is a vanishing pair under the assumption $F=\mathbb{Q}_p$, based on the results in \cite{Breuil_2003} and \cite{paskunas1}. 
 This construction, together with \cref{van}, directly imply \cref{ssB}.
\par In \cref{modpmain}, we conclude the proof of \cref{gens}
via the vanishing lemma and the construction of vanishing pair in \cref{cons}.
\section*{Acknowledgements}

This work was written as part of my ALGANT 
master's thesis at the University of Duisburg-Essen, 
under the supervision of Prof.\ Vytautas Paškūnas. 
I am deeply grateful to him for his guidance 
throughout the project. 
I also thank Prof.\ Jan Kohlhaase very much 
for his valuable suggestions, 
in particular for his help in shaping the 
final form of \cref{zprep} and its proof.
\section{Notations}\label{not}
Let \( F \) be a finite extension of \( \mathbb{Q}_p \). Denote by \( \mathcal{O} \) the ring of integers of \( F \), let \( \varpi \) be a fixed uniformizer of \( \mathcal{O} \), and let \( \mathfrak{m} \) be the maximal ideal of \( \mathcal{O} \).

Define the matrices:
\[
s := \begin{pmatrix} 0 & 1 \\ 1 & 0 \end{pmatrix}, \quad
t := \begin{pmatrix} \varpi & 0 \\ 0 & 1 \end{pmatrix}.
\]
We introduce the following subgroups of \( G := \operatorname{GL}_2(F) \):
\begin{itemize}
    \item The \textbf{center}:
        \[
        Z := \left\{ \begin{pmatrix} x & 0 \\ 0 & x \end{pmatrix} \,\middle|\, x \in F^* \right\}.
        \]
    \item The \textbf{diagonal torus}:
        \[
        T := \left\{ \begin{pmatrix} x & 0 \\ 0 & y \end{pmatrix} \,\middle|\, x, y \in F^* \right\}.
        \]
    \item The \textbf{Borel subgroup} (upper triangular matrices):
        \[
        B := \left\{ \begin{pmatrix} x & z \\ 0 & y \end{pmatrix} \,\middle|\, x, y \in F^*,\, z \in F \right\}.
        \]
    \item The \textbf{unipotent radical} of \( B \):
        \[
        U := \left\{ \begin{pmatrix} 1 & z \\ 0 & 1 \end{pmatrix} \,\middle|\, z \in F \right\}.
        \]
    \item A subgroup \( B^+ \subset B \):
        \[
        B^+ := \left\{ \begin{pmatrix} x & y \\ 0 & 1 \end{pmatrix} \,\middle|\, x \in \varpi^{\mathbb{Z}},\, y \in F \right\}.
        \]
    \item The \textbf{unipotent part of the Iwahori subgroup}:
        \[
        U_0 := \operatorname{GL}_2(\mathcal{O}) \cap U = \left\{ \begin{pmatrix} 1 & b \\ 0 & 1 \end{pmatrix} \,\middle|\, b \in \mathcal{O} \right\}.
        \]
    \item The \textbf{\( \mathbb{Z}_p \)-parts of \( U_0 \)}:
        \[
        U_1 := \left\{ \begin{pmatrix} 1 & b \\ 0 & 1 \end{pmatrix} \,\middle|\, b \in \mathbb{Z}_p \right\}.
        \]
\end{itemize}

For each integer \( n \geq 1 \), the \textbf{principal congruence subgroup} of level \( n \) is:
\[
K_n := \left\{ g \in \operatorname{GL}_2(\mathcal{O}) \,\middle|\, g \equiv I_2 \pmod{\varpi^n} \right\},
\]
where \( I_2 \) is the \( 2 \times 2 \) identity matrix.

We recall the following fundamental decomposition:\par

\begin{proposition}\cite[(5.2) Bruhat decomposition]{llcgl2}: $G=BsB\coprod B=BsU\coprod B$
\end{proposition}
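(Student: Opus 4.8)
The plan is an elementary case analysis on the lower‑left entry of a matrix, followed by one explicit factorization. Write a general element of $G$ as $g=\begin{pmatrix} a & b \\ c & d\end{pmatrix}$. If $c=0$, then $g$ is upper triangular, so $g\in B$; this accounts for the ``$B$'' piece of the decomposition.

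For the case $c\neq 0$, I would verify directly the identity
\[
\begin{pmatrix} a & b \\ c & d \end{pmatrix}
= \begin{pmatrix} 1 & a/c \\ 0 & 1 \end{pmatrix}\, s\, \begin{pmatrix} c & d \\ 0 & -\det(g)/c \end{pmatrix},
\]
in which the first factor lies in $U\subseteq B$ and, since $c\neq 0$ and $\det(g)\neq 0$, the third factor lies in $B$. Hence $g\in UsB\subseteq BsB$, which together with the previous case gives $G=B\cup BsB$. To replace $BsB$ by $BsU$, I would factor the right‑hand Borel element as (diagonal)$\cdot$(unipotent) and push the diagonal part leftward through $s$ using $s\begin{pmatrix} x & 0 \\ 0 & y\end{pmatrix}=\begin{pmatrix} y & 0 \\ 0 & x\end{pmatrix}s$; this rewrites an arbitrary element of $BsB$ as an element of $BsU$, and the reverse inclusion $BsU\subseteq BsB$ is immediate, so $BsB=BsU$.

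It remains to check disjointness. If an element of $G$ lay simultaneously in $B$ and in $BsB$, writing it as $b_1 s b_2$ with $b_1,b_2\in B$ would yield $s=b_1^{-1}(b_1 s b_2)b_2^{-1}\in B$; but $s$ has lower‑left entry $1\neq 0$, hence is not upper triangular, a contradiction. Thus $B\cap BsB=\emptyset$, and since $BsU\subseteq BsB$ we also get $B\cap BsU=\emptyset$.

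I do not anticipate a genuine obstacle: the statement is classical and the whole argument is a couple of $2\times 2$ matrix multiplications. The only points deserving a line of care are confirming that the entries $c$ and $-\det(g)/c$ appearing in the factorization are nonzero, so that the claimed factors really lie in $B$, and keeping track of which side absorbs the diagonal torus when converting between the $BsB$‑ and $BsU$‑forms.
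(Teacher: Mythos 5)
Your argument is correct. The paper does not prove this statement at all---it is quoted as a known result with a citation to a standard reference on the local Langlands correspondence for $\mathrm{GL}_2$---so there is no ``paper's proof'' to compare against; you have simply supplied the standard elementary derivation that the cited source would contain. Your factorization
\[
\begin{pmatrix} a & b \\ c & d \end{pmatrix}
= \begin{pmatrix} 1 & a/c \\ 0 & 1 \end{pmatrix}\, s\, \begin{pmatrix} c & d \\ 0 & -\det(g)/c \end{pmatrix}
\]
checks out (multiplying the right side gives top-right entry $ad/c - \det(g)/c = bc/c = b$, and all other entries match), the conversion of $BsB$ to $BsU$ by conjugating the torus part past $s$ via $s\,\mathrm{diag}(x,y) = \mathrm{diag}(y,x)\,s$ is correct, and the disjointness argument using that $B$ is a subgroup while $s \notin B$ is exactly right. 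No gaps.
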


Let $\Gamma$ be a locally profinite group with a closed subgroup $H$, $M$ be a bi-$H$-invariant open subset of $\Gamma$ 
(i.e., $HMH = M$), and $(\pi,V)$ be a smooth representation of $H$. 
We define 
\[
    \Ind_H^\Gamma\pi
    :=\left\{\,f:\Gamma\to V\;\middle|\;
    \begin{array}{l}
    f(hg)=\pi(h)f(g)\quad\text{for all }h\in H,\ g\in \Gamma,\\[4pt]
    \text{there exists an open subgroup }U_f\subseteq \Gamma\ \text{such that}\\[2pt]
    f(gu)=f(g)\ \text{for all }g\in \Gamma,\ u\in U_f
    \end{array}
    \right\},
\]
endowed with the $\Gamma$-action
\[
    (g\cdot f)(g') := f(g'g), \quad \text{for all }g,g'\in\Gamma.
\]

and define 
\[
\Ind_H^M\pi := \{f \in \Ind_H^\Gamma\pi \mid \supp(f) \subseteq M\},
\]
which is naturally a smooth $H$-subrepresentation of $\Ind_H^\Gamma\pi$ under the restricted $H$-action
\[
    (h\cdot f)(g) := f(gh), \quad \text{for all }h\in H, g\in\Gamma.
\]

Finally, given a smooth representation $(\pi, V)$ of $\mathbb{Z}_p$, 
we write $\pi(a)$ for the action of $a \in \mathbb{Z}_p$ on $V$.  
When no confusion is likely, we suppress $\pi$ from the notation and 
denote this action simply by
\[
[a] : V \to V, \qquad v \mapsto \pi(a)v,
\]
its action on $V$. 
In particular, we define the difference operator
\[
\Delta : V \to V, \qquad v \mapsto [1]v - [0]v.
\]
\begin{section}{The Vanishing Pair, Vanishing Representation and Vanishing Lemma}\label{vansec}
  The primary goal for this section is a 
  \emph{Vanishing Lemma} (\cref{van}),
   which reduces the non-existence of $B^+$-invariant multilinear forms to the construction of certain subspaces in each representation. Its proof is founded on a study of smooth $\mathbb{Z}_p$-representations
   (\cref{zprep}, \cref{trad}, \cref{sequ}), 
   which establishes the condition $\Delta V = V$ as a sufficient criterion for the vanishing of $\mathbb{Z}_p$-invariant forms, as well as technical criteria for its verification.

  This principle is abstracted into the notion of a \emph{vanishing pair} (\cref{vandef}), forming the framework for its application to representations of $B^+$.
  \begin{lemma}\label{zprep}
    Let $V_1$ and $V_2$ be two smooth representations of \(\ \mathbb{Z}_p \) 
    such that \(\Delta V_1=V_1\), then 
    \(\Delta(V_1\otimes V_2)=(V_1\otimes V_2)\). In particular, 
    \[\Hom_{\mathbb{Z}_p}(V_1\otimes V_2,\mathbbm{1})=0\]
  \end{lemma}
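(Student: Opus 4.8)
The plan is to work entirely at the level of the difference operator $\Delta$ on a tensor product and to exploit smoothness to make a finiteness reduction. First I would observe the product rule: for $v_1 \in V_1$, $v_2 \in V_2$ we have
\[
\Delta(v_1 \otimes v_2) = [1]v_1 \otimes [1]v_2 - [0]v_1 \otimes [0]v_2 = (\Delta v_1) \otimes ([1]v_2) + ([0]v_1) \otimes (\Delta v_2),
\]
so in particular $\Delta(V_1 \otimes V_2) \supseteq (\Delta V_1) \otimes (\mathbb{Z}_p \cdot V_2)$ modulo the second term; the hypothesis $\Delta V_1 = V_1$ then suggests that "most" of $V_1 \otimes V_2$ is already hit. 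The cleanest route is to show that for every $v_1 \in V_1$ and every $v_2 \in V_2$, the elementary tensor $v_1 \otimes v_2$ lies in $\Delta(V_1 \otimes V_2)$; since such tensors span, this gives $\Delta(V_1 \otimes V_2) = V_1 \otimes V_2$.

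To prove that, fix $v_2 \in V_2$. By smoothness of $V_2$ the stabilizer of $v_2$ in $\mathbb{Z}_p$ is open, hence contains $p^N \mathbb{Z}_p$ for some $N$; thus $[a]v_2$ takes only finitely many values as $a$ ranges over $\mathbb{Z}_p$, and the $\mathbb{Z}_p$-subrepresentation $W_2 := \langle [a]v_2 : a \in \mathbb{Z}_p\rangle$ is finite-dimensional, with $[1]$ acting on it as an invertible operator. Now consider, for arbitrary $v_1 \in V_1$, the element $w := [1]^{-1} v_1$ (using that $[1]$ is invertible on the smooth $\mathbb{Z}_p$-representation $V_1$, as $\mathbb{Z}_p$ acts through a finite quotient on the finite-dimensional subrepresentation generated by $v_1$). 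Then from the product rule,
\[
\Delta\bigl( ([1]^{-1}v_1) \otimes v_2 \bigr) = v_1 \otimes ([1]v_2) + \bigl([0][1]^{-1}v_1\bigr) \otimes (\Delta v_2).
\]
Rearranging, $v_1 \otimes ([1]v_2) \in \Delta(V_1\otimes V_2) + V_1 \otimes (\Delta V_2)$. The key point is then to handle the error term $V_1 \otimes (\Delta V_2)$: I would apply the hypothesis $\Delta V_1 = V_1$ in the other slot. Indeed, writing any $v_1 = \Delta u_1$ and using the symmetric product rule $\Delta(u_1 \otimes x) = (\Delta u_1)\otimes [1]x + [0]u_1 \otimes \Delta x$, one gets $(\Delta u_1) \otimes ([1]x) \in \Delta(V_1 \otimes V_2) + V_1 \otimes \Delta x$; iterating this replacement and using that on the finite-dimensional $W_2$ the operator $\Delta = [1] - [0]$ is nilpotent (it is $[1]$ times a unipotent-minus-identity, hence nilpotent, since $[1]$ has finite order on $W_2$ and its reduction is unipotent... more carefully: $[1]$ is invertible of finite order, so $\Delta = [1]([1]^{\,?} - 1)$ — rather, on $W_2$ all eigenvalues of $[1]$ are roots of unity equal to $1$ in characteristic $p$ up to the order being a $p$-power, so $[1]$ is unipotent and $\Delta$ is nilpotent), the error terms $V_1 \otimes \Delta^k W_2$ eventually vanish. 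Running the recursion finitely many times shows $v_1 \otimes ([1]v_2) \in \Delta(V_1 \otimes V_2)$, and since $[1]$ is surjective on $W_2$ this gives $v_1 \otimes v_2' \in \Delta(V_1 \otimes V_2)$ for all $v_2' \in W_2$, in particular for $v_2$.

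The main obstacle is exactly this bookkeeping of the error terms: one must make precise that the nilpotence of $\Delta$ on the finite-dimensional smooth $\mathbb{Z}_p$-module generated by $v_2$ lets the recursion terminate, and that invertibility of $[1]$ (again a consequence of smoothness plus $\mathbb{Z}_p$ being pro-$p$, so acting through a finite $p$-group on which $[1]$ is unipotent) is available in both slots. Once $\Delta(V_1 \otimes V_2) = V_1 \otimes V_2$ is established, the vanishing of $\Hom$ is immediate: any $\mathbb{Z}_p$-invariant form $\phi : V_1 \otimes V_2 \to \mathbbm{1}$ satisfies $\phi(\Delta w) = \phi([1]w) - \phi([0]w) = \phi(w) - \phi(w) = 0$ for all $w$, so $\phi$ annihilates $\Delta(V_1 \otimes V_2) = V_1 \otimes V_2$, forcing $\phi = 0$.
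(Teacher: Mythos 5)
Your proof is correct and follows essentially the same route as the paper: both filter by powers of $\Delta$ (the paper filters $V_2$ by the subspaces $\ker\Delta^n$, you work inside the finite-dimensional cyclic $W_2 \ni v_2$ on which $\Delta$ is nilpotent), and both use the hypothesis $\Delta V_1 = V_1$ to write $v_1 = \Delta u_1$ so that $\Delta(u_1 \otimes x) = v_1 \otimes [1]x + u_1 \otimes \Delta x$ pushes the problem one step down the filtration.

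One computational slip worth flagging: your displayed identity
\[
\Delta\bigl(([1]^{-1}v_1)\otimes v_2\bigr) \;=\; v_1\otimes[1]v_2 \;+\; \bigl([1]^{-1}v_1\bigr)\otimes\Delta v_2
\]
is incorrect, since $\Delta\bigl([1]^{-1}v_1\bigr) = v_1 - [1]^{-1}v_1 \ne v_1$; the term $-[1]^{-1}v_1 \otimes [1]v_2$ is missing. This line is in fact unnecessary, because your very next step, taking $v_1 = \Delta u_1$ supplied by the hypothesis, gives the correct identity $v_1\otimes[1]x = \Delta(u_1\otimes x) - u_1\otimes\Delta x$ and is exactly the paper's move. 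The paper's version is also slightly cleaner at the base of the induction: by restricting to $x\in\ker\Delta$ one has $[1]x = x$, so $v_1\otimes x = \Delta(u_1\otimes x)$ directly and no appeal to invertibility of $[1]$ on $W_2$ is needed, whereas your recursion treats general $x \in W_2^{(k+1)}$ and then must invoke bijectivity of $[1]$ on each $W_2^{(k)}$ to remove the shift by $[1]$. Both bookkeepings work, but keep the distinction in mind, and correct the spurious identity.
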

  \begin{proof}
    We prove the first assertion, as the second follows immediately from it.
    
    For each $n\in\mathbb{N}$, denote 
    \[\ker\left(\Delta^n\right):=\{v\in V_2\mid\Delta^nv=0\}.\]
    Since \(V_2\) is smooth, we have the filtration
    \[
    V_2 = \bigcup_{n=1}^{\infty} \ker\left(\Delta^n\right).
    \]
    It therefore suffices to show that for every \(n \in \mathbb{Z}_{>0}\),
    \[
    \Delta\left(V_1\otimes \ker(\Delta^n)\right) = V_1\otimes \ker(\Delta^n).
    \]
    We proceed by induction on \(n\).
  
    \textbf{Base Case (\(n = 1\)):}
    Let \(v \in V_1\) and \(u \in \ker(\Delta)\). By hypothesis, there exists \(v' \in V_1\) such that \(\Delta v' = v\). Then
    \[
    \begin{aligned}
    \Delta(v'\otimes u) &= [1]v'\otimes [1]u - [0]v'\otimes [0]u \\
    &= [1]v'\otimes u - v'\otimes u \\
    &= (\Delta v')\otimes u = v\otimes u.
    \end{aligned}
    \]
    Hence, \(v\otimes u \in \Delta(V_1\otimes \ker(\Delta))\), and the base case holds.

    \textbf{Inductive Step:}
    Assume the statement holds for some \(n \geq 1\). Consider the exact sequence of \(\mathbb{Z}_p\)-representations:
    \[
    0 \to \ker(\Delta^n) \to \ker(\Delta^{n+1}) \to Q \to 0,
    \]
    where \(Q := \ker(\Delta^{n+1})/\ker(\Delta^n)\). Tensoring with \(V_1\) yields the exact sequence:
    \[
    V_1\otimes \ker(\Delta^n) \to V_1\otimes \ker(\Delta^{n+1}) \to V_1\otimes Q \to 0.
    \]
    
    Note that \(\Delta Q = 0\) by construction. By the same argument as in the base case, we have
    \[
    \Delta(V_1\otimes Q) = V_1\otimes Q.
    \]
    
    Now, given any element \(x \in V_1\otimes
     \ker(\Delta^{n+1})\), its image \(\bar{x}\) in \(V_1\otimes Q\) satisfies \(\bar{x} = \Delta y\) for some \(y \in V_1\otimes Q\). Let \(y' \in V_1\otimes \ker(\Delta^{n+1})\) be a lift of \(y\). Then \(x - \Delta y'\) lies in \(V_1\otimes \ker(\Delta^n)\). By the induction hypothesis,
    \[
    x - \Delta y' = \Delta z \quad \text{for some }
     z \in V_1\otimes \ker(\Delta^n).
    \]
    Therefore, \(x = \Delta(y' + z)\in \Delta(V_1\otimes
    \ker(\Delta^{n+1}))\)
    , completing the induction.
\end{proof}

    \begin{proposition}\label{trad}
            Let \(V_1, \ldots, V_n\) be smooth infinite-dimensional representations of \(\mathbb{Z}_p\). 
            If \(\dim_k(V_i^{\mathbb{Z}_p}) = 1\) for all \(1 \leq i \leq n\), then
            \[
            \bigoplus_{i=1}^n V_i \subseteq \Delta\left(\bigoplus_{i=1}^n V_i\right).
            \]
    \end{proposition}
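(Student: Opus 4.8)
The plan is to reduce at once to a single factor and then run a short dimension count along the filtration of $V$ by kernels of powers of $\Delta$. First observe that $\mathbb{Z}_p$ acts diagonally on $\bigoplus_{i=1}^{n}V_i$, so $\Delta$ acts componentwise and $\Delta\bigl(\bigoplus_{i}V_i\bigr)=\bigoplus_{i}\Delta(V_i)$; since $\Delta$ is in any case an endomorphism of $\bigoplus_i V_i$, the asserted inclusion is equivalent to the equalities $\Delta V_i=V_i$ for every $i$. Hence it suffices to treat the case $n=1$: given a smooth infinite-dimensional representation $V$ of $\mathbb{Z}_p$ with $\dim_k V^{\mathbb{Z}_p}=1$, I must show that $\Delta\colon V\to V$ is surjective.

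Next I would set $V_m:=\ker(\Delta^m)$, an increasing chain of subspaces, and record two preliminary points. First, smoothness gives $V=\bigcup_{m\ge 1}V_m$: any smooth vector is fixed by some open subgroup $p^{j}\mathbb{Z}_p$, hence lies in the finite-dimensional subrepresentation it generates, on which $[1]$ acts with order a power of $p$ and therefore unipotently since $\operatorname{char}k=p$; thus $\Delta=[1]-[0]$ is nilpotent there and the vector is killed by a power of $\Delta$. Second, $V_1=\ker\Delta=V^{\mathbb{Z}_p}$ is one-dimensional by hypothesis.

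The heart of the argument is then the following. The operator $\Delta$ maps $V_{m+1}$ into $V_m$ with kernel $V_{m+1}\cap V_1=V_1$, so it induces an injection $V_{m+1}/V_1\hookrightarrow V_m$; by induction on $m$ this yields $\dim_k V_m\le m$, so every $V_m$ is finite-dimensional and $\dim_k V_{m+1}\le\dim_k V_m+1$. Now comes the dichotomy on which everything hinges: if $V_{m+1}=V_m$ for some $m$, then $\Delta^{m+2}v=0$ forces $\Delta v\in V_{m+1}=V_m$, hence $\Delta^{m+1}v=0$, so $V_{m+2}=V_{m+1}$, and iterating gives $V_{m'}=V_m$ for all $m'\ge m$, whence $V=\bigcup_{m'}V_{m'}=V_m$ would be finite-dimensional --- contrary to hypothesis. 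Therefore $\dim_k V_{m+1}=\dim_k V_m+1$ for all $m$; since $\Delta V_{m+1}\subseteq V_m$ with $\dim_k\Delta V_{m+1}=\dim_k V_{m+1}-1=\dim_k V_m$, this forces $\Delta V_{m+1}=V_m$, and passing to the union over $m$ gives $\Delta V=\bigcup_m\Delta V_{m+1}=\bigcup_m V_m=V$, as desired.

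The step I expect to be the main obstacle --- everything else being essentially formal bookkeeping --- is the dichotomy in the previous paragraph: it is precisely the point at which smoothness and infinite-dimensionality are used together, and the statement genuinely fails without them (for instance $k[\mathbb{Z}/p\mathbb{Z}]$ has one-dimensional $\mathbb{Z}_p$-invariants, yet $\Delta$ is not surjective on it). Conceptually the result reflects the classification of $\Delta$-power-torsion modules with one-dimensional socle over a discrete valuation ring --- such a module is either finite cyclic or the infinite-dimensional divisible hull of its socle, on which multiplication by the uniformizer, here $\Delta$, is surjective --- but the filtration computation above keeps the proof within elementary linear algebra, in keeping with the rest of this section.
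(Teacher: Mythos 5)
Your proof is correct and rests on the same underlying observation as the paper's, though the execution differs in a useful way. Both arguments reduce componentwise to showing $\Delta V = V$ for a single smooth, infinite-dimensional $V$ with $\dim_k V^{\mathbb{Z}_p}=1$, and both hinge on the fact that a one-dimensional $\ker\Delta$ forces the $\Delta$-filtration to grow by at most one dimension at each step. The paper filters by $V^{p^j\mathbb{Z}_p}=\ker(\Delta^{p^j})$ and invokes the Jordan normal form of the nilpotent restriction $\Delta|_{V^{p^j\mathbb{Z}_p}}$ to produce a basis from which $\Delta V^{p^j\mathbb{Z}_p}\supseteq V^{p^{j-1}\mathbb{Z}_p}$ can be read off whenever $d_{j-1}<d_j$, then passes to the union over such $j$. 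You instead use the finer filtration $V_m=\ker(\Delta^m)$ and run a bare dimension count: rank--nullity for $\Delta\colon V_{m+1}\to V_m$ (with kernel exactly $V_1$) gives $\dim_k V_{m+1}\le \dim_k V_m+1$, and your stabilization dichotomy rules out equality of consecutive terms, so the chain grows by exactly one each step, whence $\Delta V_{m+1}=V_m$ and $\Delta V=V$ upon taking the union. Your route bypasses Jordan form entirely and makes explicit the step the paper treats somewhat tersely (that smoothness together with infinite-dimensionality guarantees the filtration never stabilizes), so it is a bit more self-contained; conversely, the paper's Jordan-block description is perhaps more structurally suggestive of why the result holds. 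Both are elementary linear algebra proving the same fact.
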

    \begin{proof}
        It suffices to show that \(\Delta V_i = V_i\) for each \(1 \leq i \leq n\). 
        We fix an index \(i\) and prove this equality.
       \par For each $j\in\mathbb{Z}_{>0}$,
$\Delta$ restricts to an endomorphism of 
$V_i^{p^j\mathbb{Z}_p}$ satisfying that
\begin{displaymath}
(\Delta|_{V_i^{p^j\mathbb{Z}_p}})^{p^j}=
([p^j]-[0])|_{V_i^{p^j\mathbb{Z}_p}}=0
\end{displaymath}
and
\begin{displaymath}
\dim_k(\ker(\Delta|_{V_i^{p^j\mathbb{Z}_p}}))=
\dim_k(V_i^{\mathbb{Z}_p})=1
\end{displaymath}
while $V_i^{p^{j-1}\mathbb{Z}_p}\subseteq V_i^{p^{j}\mathbb{Z}_p}$
equals to the subspace 
\[\ker([p^{j-1}]-[0])=\ker((\Delta|_{V_i^{p^j\mathbb{Z}_p}})^{p^{j-1}})\]

By standard linear algebra (considering the Jordan form), the dimension \(d_j := \dim_k(V_i^{p^j\mathbb{Z}_p})\) satisfies \(d_j \leq p^j\), 
and there exists a basis \(\{v_1, \ldots, v_{d_j}\}\) of \(V_i^{p^j\mathbb{Z}_p}\) such that:
\begin{itemize}
\item \(V_i^{p^{j-1}\mathbb{Z}_p} = \operatorname{span}\langle v_1, \ldots, v_{d_{j-1}} \rangle\),
\item \(\Delta|_{V_i^{p^j\mathbb{Z}_p}}\) is represented by a nilpotent Jordan block matrix, i.e. of the form
\begin{displaymath}
    \begin{pmatrix}
        0 & 1 & 0 & \cdots & 0 & 0 \\
        0 & 0 & 1 & \cdots & 0 & 0 \\
        0 & 0 & 0 & \cdots & 0 & 0 \\
        \vdots & \vdots & \vdots & \ddots & \vdots & \vdots \\
        0 & 0 & 0 & \cdots & 0 & 1 \\
        0 & 0 & 0 & \cdots & 0 & 0 \\
        \end{pmatrix}
    \end{displaymath}
\end{itemize}
In particular, for \(1 \leq l < d_j\), we have \(v_l = 
\Delta v_{l+1}\). 
If \(d_{j-1} < d_j\), then:
\begin{equation}\label{buzhidao}
\Delta V_i^{p^j\mathbb{Z}_p} = \operatorname{span}\langle v_1, \ldots, v_{d_j-1} \rangle \supseteq \operatorname{span}\langle v_1, \ldots, v_{d_{j-1}} \rangle = V_i^{p^{j-1}\mathbb{Z}_p}.
\end{equation}
It follows from the smoothness of \(V_i\), the condition \(\dim_k(V_i) = \infty\), and the bound \(d_j \leq p^j\) for all \(j \in \mathbb{N}\) that
\[
V_i = \bigcup_{\substack{j \in \mathbb{N} \\ V_i^{p^{j-1}\mathbb{Z}_p} \subsetneq V_i^{p^j\mathbb{Z}_p}}} V_i^{p^{j-1}\mathbb{Z}_p}.
\]
This, together with \eqref{buzhidao}, yields
\[
V_i \subseteq \Delta V_i.
\]
This completes the proof for each \(i\), and the proposition follows.

\end{proof}
\begin{lemma}\label{sequ}
    Let \(V\) be a smooth representation of \(\mathbb{Z}_p\). Suppose that for every \(v \in V^{\mathbb{Z}_p}\), there exists a sequence of vectors \(\{v_n \in V \mid n \in \mathbb{N}\}\) such that
    \[
    v_0 = 0, \quad v_1 = v, \quad \text{and} \quad \Delta v_{n+1} = v_n \quad \text{for all } n \in \mathbb{N}.
    \]
    Then \(\Delta V = V\).
    \end{lemma}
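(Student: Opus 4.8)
The plan is not to invert $\Delta$ on all of $V$ in one stroke, but to climb the kernel filtration of $\Delta$ one step at a time. Since $V$ is smooth and we work over a field of characteristic $p$, for $v\in V^{p^j\mathbb{Z}_p}$ we have $\Delta^{p^j}v=([1]-[0])^{p^j}v=([p^j]-[0])v=0$ (using $[0]=\mathrm{id}$), so $V=\bigcup_{n\ge 1}\ker(\Delta^n)$, and likewise $\ker(\Delta)=V^{\mathbb{Z}_p}$. Hence it suffices to prove
\[
\ker(\Delta^n)\subseteq \Delta V\qquad\text{for every }n\ge 1,
\]
for then $V\subseteq\Delta V$, and the reverse inclusion is automatic, giving $\Delta V=V$.

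I would prove this inclusion by induction on $n$. For $n=1$: given $v\in\ker(\Delta)=V^{\mathbb{Z}_p}$, the hypothesis furnishes a sequence with $v_1=v$ and $\Delta v_2=v_1=v$, so $v\in\Delta V$. For the inductive step, suppose $\ker(\Delta^n)\subseteq\Delta V$ and take $w\in\ker(\Delta^{n+1})$. Put $v:=\Delta^n w$, which lies in $\ker(\Delta)=V^{\mathbb{Z}_p}$, and let $(v_k)_{k\in\mathbb{N}}$ be a sequence attached to this $v$ by the hypothesis. Telescoping the relations $\Delta v_{k+1}=v_k$ gives $\Delta^n v_{n+1}=v_1=v=\Delta^n w$, so $w-v_{n+1}\in\ker(\Delta^n)$, which by the induction hypothesis lies in $\Delta V$; on the other hand $v_{n+1}=\Delta v_{n+2}\in\Delta V$ directly from the sequence. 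Adding, $w=(w-v_{n+1})+v_{n+1}\in\Delta V$, completing the induction and hence the proof.

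The argument is entirely elementary, so there is no real obstacle; the one point that must be respected is that the hypothesis supplies an \emph{infinite} sequence indexed by all of $\mathbb{N}$, because the inductive step consumes two of its terms --- $v_{n+1}$ to push the discrepancy $w-v_{n+1}$ into $\ker(\Delta^n)$, and $v_{n+2}$ to exhibit $v_{n+1}$ itself as a value of $\Delta$. Choosing the induction statement to be surjectivity of $\Delta$ onto $\ker(\Delta^n)$, rather than attempting to build a single preimage of a general $w$ outright, is exactly what makes the bookkeeping close up.
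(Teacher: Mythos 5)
Your proof is correct and follows essentially the same route as the paper: reduce to showing $\ker(\Delta^n)\subseteq\Delta V$ by induction, use the hypothesized sequence to push the discrepancy $w-v_{n+1}$ into $\ker(\Delta^n)$, and exhibit $v_{n+1}$ itself as $\Delta v_{n+2}$. The only (cosmetic) difference is that the paper separately handles the case $u\in\ker(\Delta^k)$ inside the inductive step, a case split your computation makes unnecessary.
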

    
    \begin{proof}
    Since \(V\) is smooth, we have \(V = \bigcup_{i=1}^{\infty} \ker\left(\Delta^i\right)\). It therefore suffices to show that
    \[
    \ker\left(\Delta^i\right) \subseteq \Delta V
    \]
    for every \(i \in \mathbb{Z}_{>0}\). We will prove this by induction on \(i\).
    
    \noindent\textbf{Base Case (\(i = 1\)):}
    Let \(u \in \ker\Delta\). By hypothesis, there exists a sequence \(\{v_n\}\) with \(v_1 = u\) and \(\Delta v_{n+1} = v_n\) for all \(n \in \mathbb{N}\). In particular, \(\Delta v_2 = v_1 = u\). Hence, \(u \in \Delta V\).
    
    \noindent\textbf{Inductive Step:}
    Assume that \(\ker\left(\Delta^k\right) \subseteq 
    \Delta V\) for some \(k \in \mathbb{Z}_{>0}\).
     Let \(u \in \ker\left(\Delta^{k+1}\right)\). If \(u \in \ker\left(\Delta^k\right)\), then \(u \in \Delta V\) by the induction hypothesis.
    
    Now, suppose \(u \notin \ker\left(\Delta^k\right)\). Then \(w := \Delta^k u\) is a nonzero vector in \(\ker\Delta\). By the hypothesis on \(V\), there exists a sequence \(\{v_n \in V \mid n \in \mathbb{N}\}\) such that
    \[
    v_0 = 0, \quad v_1 = w, \quad \text{and} \quad \Delta v_{n+1} = v_n \quad \text{for all } n \in \mathbb{N}.
    \]
    
    Now compute that the vector \(u - v_{k+1}\) satisfies
    \[
    \Delta^k (u - v_{k+1}) = \Delta^k u - \Delta^k v_{k+1} = w - v_1 = w - w = 0.
    \]
    Thus, \(u - v_{k+1} \in \ker\left(\Delta^k\right)\). By the induction hypothesis,
    \[
    u - v_{k+1} \in \Delta V.
    \]
    Furthermore, from the sequence property, we have \(v_{k+1} = \Delta v_{k+2} \in \Delta V\). Therefore, we conclude that
    \[
    u = (u - v_{k+1}) + v_{k+1} \in \Delta V.
    \]
    This completes the induction step, and hence the proof.
    \end{proof}

\begin{definition}\label{vandef}
    Let $\pi$ be an infinite-dimensional smooth representation of 
    $B^+$ over $k$, if there exists a  
    $U_1$-subrepresentation $W$ of $\pi$ such that
    \begin{enumerate}
      \item $\pi = \sum_{n \in \mathbb{N}} t^{n} W$;
      \item $\Delta W=W$ as $\mathbb{Z}_p\cong U_1$-representation.
    \end{enumerate}
    Then we call the pair $(\pi, W)$ a \emph{vanishing pair} and $\pi$ a \emph{vanishing representation}.
  \end{definition}

  \begin{lemma}[Vanishing Lemma]
    Let $\pi_1$ be a vanishing representation of 
    $B^+$, $\pi_2$ be a smooth representation of $B^+$ 
    and $\chi:T\to k^*$ be a character, 
     then 
    \begin{align*}\Hom_{B^+}(\pi_1\otimes \pi_2,\chi)=0
    \end{align*}
\end{lemma}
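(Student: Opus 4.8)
The plan is to fix an arbitrary $\ell\in\Hom_{B^+}(\pi_1\otimes\pi_2,\chi)$ and show that $\ell=0$. Let $W\subseteq\pi_1$ be the $U_1$-subrepresentation provided by \cref{vandef}, so that $\pi_1=\sum_{n\in\mathbb{N}}t^nW$ and $\Delta W=W$ as $\mathbb{Z}_p\cong U_1$-representations. Since tensoring over $k$ is exact and hence commutes with sums of subspaces, $\pi_1\otimes\pi_2=\sum_{n\in\mathbb{N}}\bigl(t^nW\otimes\pi_2\bigr)$, so it suffices to show that $\ell$ annihilates $t^nW\otimes\pi_2$ for every $n\in\mathbb{N}$.

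Fix $n$. I would work with the subgroup $H_n:=t^nU_1t^{-n}=\left\{\begin{pmatrix}1 & \varpi^nb\\0 & 1\end{pmatrix}\,\middle|\,b\in\mathbb{Z}_p\right\}$ of $U\subseteq B^+$, identified with $\mathbb{Z}_p$ via $b\mapsto t^n\begin{pmatrix}1 & b\\0 & 1\end{pmatrix}t^{-n}$. Two observations drive the argument. First, $H_n$ is unipotent, so $\chi|_{H_n}$ is trivial; hence the restriction of $\ell$ to $t^nW\otimes\pi_2$ is an $H_n$-invariant linear form with values in the trivial representation. Second, a short conjugation computation shows that $t^nW$ is stable under $H_n$ and that, under the identification $H_n\cong\mathbb{Z}_p$ above, the action of $b\in\mathbb{Z}_p$ on $t^nW$ is $t^nw\mapsto t^n([b]_Ww)$; in particular the difference operator on $t^nW$ equals $t^n\circ\Delta\circ t^{-n}$, so $\Delta(t^nW)=t^n(\Delta W)=t^nW$. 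Now apply \cref{zprep} with $V_1=t^nW$ and $V_2=\pi_2$, both viewed as smooth $\mathbb{Z}_p\cong H_n$-representations (smoothness of $\pi_2$ and of $W$ being inherited from $B^+$-smoothness of $\pi_1,\pi_2$): it yields $\Hom_{H_n}(t^nW\otimes\pi_2,\mathbbm{1})=0$, and therefore $\ell|_{t^nW\otimes\pi_2}=0$.

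Running this over all $n\in\mathbb{N}$ and invoking the decomposition $\pi_1\otimes\pi_2=\sum_n(t^nW\otimes\pi_2)$ completes the proof. The only delicate point is the second observation: one must carefully track how the $\mathbb{Z}_p$-action, and with it the operator $\Delta$, transforms under conjugation by $t^n$ together with the chosen isomorphism $H_n\cong\mathbb{Z}_p$, so as to see that $t^nW\otimes\pi_2$ genuinely satisfies the hypotheses of \cref{zprep}. Everything else — triviality of $\chi$ on unipotent subgroups, $H_n$-stability of $t^nW$, and the compatibility of the $k$-tensor product with sums of subspaces — is formal.
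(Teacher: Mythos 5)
Your proof is correct, and it relies on the same two ingredients as the paper's proof: \cref{zprep} and the decomposition $\pi_1\otimes\pi_2=\sum_{n}\bigl(t^nW\otimes\pi_2\bigr)$ coming from the vanishing-pair structure. The difference is in how the reduction is packaged. The paper applies \cref{zprep} exactly once, to the $U_1$-representation $W\otimes\pi_2$, obtaining $\Hom_{U_1}(W\otimes\pi_2,\mathbbm{1})=0$; it then observes that any $\phi\in\Hom_{B^+}(\pi_1\otimes\pi_2,\chi)$ satisfies $\phi(t^n w)=\chi(t^n)\phi(w)$ by $B^+$-equivariance (since $t\in B^+$ and $t\in T$), so that the restriction map $\phi\mapsto\phi|_{W\otimes\pi_2}$ is injective and the Hom space vanishes. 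You instead move the group rather than the form: for each $n$ you conjugate $U_1$ into $H_n:=t^nU_1t^{-n}$, verify that $t^nW$ is $H_n$-stable with $\Delta(t^nW)=t^nW$ under the transported identification $H_n\cong\mathbb{Z}_p$, and invoke \cref{zprep} once per $n$ with $H_n$ in place of $U_1$. The two routes are dual — moving the form versus moving the subgroup — and both are valid; the paper's is marginally leaner, since it calls \cref{zprep} only once and never needs to track how the $\mathbb{Z}_p$-structure and $\Delta$ transform under conjugation by $t^n$. One detail you handle correctly and that is worth flagging: $\chi|_{H_n}$ is trivial because $H_n\subseteq U$ and a character of $T$ extends to $B$ (hence to $B^+$) trivially on the unipotent radical, which is what makes the restricted form land in $\Hom_{H_n}(\cdot,\mathbbm{1})$ as required by \cref{zprep}.
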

\begin{proof} Let $W_1$ be a linear subspace of $\pi_1$ 
    such that $(\pi_1,W_1)$ is a vanishing pair.
    \par It follows from \cref{zprep} that 
    \begin{align}\label{res}
        \Hom_{U_1}\left( W_1\otimes\pi_2, \mathbbm{1} \right)
        &= 0.
    \end{align}
     \textbf{Claim}: The restriction map 
            \begin{align*}
                \Hom_{B^+}\left(\pi_1\otimes\pi_2, \chi\right) 
                &\longrightarrow \Hom_{U_1}\left(W_1\otimes \pi_2, \mathbbm{1} \right) \\
                \phi &\longmapsto \phi|_{W_1\otimes\pi_2} \notag
            \end{align*}
        is injective.
    \par    The assertion follows directly from the claim and \cref{res}.
\\ \textbf{Proof of Claim:}   Since 
 \( \pi_1 = \sum_{j \in\mathbb{N}} t^{j}W_1 \), we get
            
        \[
        \pi_1\otimes \pi_2 = \sum_{j \in\mathbb{N}} t^{j} \left( W_1\otimes\pi_2 \right)
        \]
         Let \( v \in \pi_1\otimes\pi_2\) and 
         \( \phi \in \Hom_{B^+}\left(\pi_1\otimes\pi_2, \chi \right) \).
        Then $v =\sum_{i=1}^mt^{n_i}w_i$ for some 
        \(m\in\mathbb{N}\) and \(n_i\in\mathbb{N} ,w_i
         \in W_1\otimes\pi_2 \) for each $1\leq i\leq m$, and
        \[
        \phi(v) = \phi(\sum_{i=1}^mt^{n_i}w_i) = 
        \sum_{i=1}^m\chi(t^{n_i})\phi(w_i)
        \]
        Hence \( \phi \) is determined by its restriction to \( W_1\otimes\pi_2 \).
\end{proof}
\end{section}
\begin{section}{Construction of Vanishing Pair}\label{cons}
    With the discussions in the last section, particularly the vanishing lemma, 
    we reduce the proof of \cref{ssB}, \cref{gens} and 
    \cref{pmain} into the construction of a vanishing pair 
    in every infinite-dimensional admissible irreducible representations of $G$.
In this section, we construct the vanishing pair in principal series, 
special series 
and supersingular representations respectively.
\subsection{Principal Series and Special Series Case}
Since both principal series and special series are quotients of 
\(\Ind_B^G\chi\) for some character $\chi:T\to k^*$, 
it suffices to perform the construction for arbitrary $\Ind_B^G\chi$.
\begin{proposition}\label{prin}
        Let $\chi$ be an arbitrary character of $T$, then 
      \begin{displaymath}
      (\Ind_{B}^{BsB}\chi,W_\chi:=\Ind_{B}^{BsU_0}\chi)
      \end{displaymath}
        is a vanishing pair and \(\Ind_{B}^{BsB}\chi\)
     is vanishing. \end{proposition}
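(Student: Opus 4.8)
\emph{Coordinates and the $B^+$-action.}
Write $\chi=\chi_1\otimes\chi_2$ and $u_z:=\begin{pmatrix}1&z\\0&1\end{pmatrix}$. By the Bruhat decomposition $G=BsU\sqcup B$, the assignment $f\mapsto\phi_f$ with $\phi_f(z):=f(su_z)$ identifies $\Ind_B^{BsB}\chi$ with the space of locally constant functions $\phi:F\to k$ of bounded support — boundedness holds because $\supp f$ is clopen and compact modulo $B$, as $B\backslash G\cong\mathbb P^1(F)$ is compact — and under this identification $W_\chi=\Ind_B^{BsU_0}\chi$ corresponds to $\{\phi:\supp\phi\subseteq\mathcal O\}$. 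In particular $\Ind_B^{BsB}\chi$ is smooth, being a subrepresentation of the smooth $G$-representation $\Ind_B^G\chi$, and it is infinite-dimensional. From $su_zu_a=su_{z+a}$ and $su_zt=\begin{pmatrix}1&0\\0&\varpi\end{pmatrix}su_{\varpi^{-1}z}$ one reads off that $u_a\in U$ acts by $\phi\mapsto(z\mapsto\phi(z+a))$ and $t$ acts by $\phi\mapsto(z\mapsto\chi_2(\varpi)\phi(\varpi^{-1}z))$. Since $\mathbb Z_p\subseteq\mathcal O$, the subspace $\{\phi:\supp\phi\subseteq\mathcal O\}$ is stable under $U_1$, so $W_\chi$ is a $U_1$-subrepresentation, on which $\Delta=[1]-[0]$ acts by $\phi\mapsto(z\mapsto\phi(z+1)-\phi(z))$.

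\emph{Condition (1).}
For $f\in\Ind_B^{BsB}\chi$ we have $\supp\phi_f\subseteq\varpi^{-n}\mathcal O$ for some $n\ge0$; by the formula for the $t$-action, $t^{-n}W_\chi$ corresponds exactly to $\{\phi:\supp\phi\subseteq\varpi^{-n}\mathcal O\}$, whence $f\in t^{-n}W_\chi$, while conversely each $t^{-n}W_\chi\subseteq\Ind_B^{BsB}\chi$. Therefore $\Ind_B^{BsB}\chi=\sum_{n\ge0}t^{-n}W_\chi$ (here $t^{-1}\in B^+$, so this matches condition (1) of \cref{vandef} after replacing $t$ by $t^{-1}$).

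\emph{Condition (2), and conclusion.}
It remains to prove $\Delta W_\chi=W_\chi$. Choose a $\mathbb Z_p$-basis $1=e_1,\dots,e_d$ of $\mathcal O$; it yields a topological $\mathbb Z_p$-module isomorphism $\mathcal O\cong\mathbb Z_p\oplus\mathbb Z_p^{\,d-1}$ under which translation by $\mathbb Z_p$ moves only the first coordinate, hence an isomorphism of smooth $\mathbb Z_p$-representations $W_\chi\cong C^\infty(\mathbb Z_p,k)\otimes_k C^\infty(\mathbb Z_p^{\,d-1},k)$ with $\mathbb Z_p$ acting by translation on the first factor and trivially on the second. By \cref{zprep} it suffices to show $\Delta\,C^\infty(\mathbb Z_p,k)=C^\infty(\mathbb Z_p,k)$. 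Let $\psi\in C^\infty(\mathbb Z_p,k)$ be inflated from $\mathbb Z/p^N\mathbb Z$ and view it inside $C(\mathbb Z/p^{N+1}\mathbb Z,k)=k[\mathbb Z/p^{N+1}\mathbb Z]$; there $\Delta$ is multiplication by $g-1$ for a generator $g$, so its image is the augmentation ideal, that is, the kernel of $h\mapsto\sum_x h(x)$. But $\sum_{x\in\mathbb Z/p^{N+1}\mathbb Z}\psi(x)=p\sum_{y\in\mathbb Z/p^N\mathbb Z}\psi(y)=0$ in $k$ because $k$ has characteristic $p$, so $\psi\in\Delta\,C^\infty(\mathbb Z_p,k)$. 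This establishes condition (2), and hence $(\Ind_B^{BsB}\chi,W_\chi)$ is a vanishing pair and $\Ind_B^{BsB}\chi$ is vanishing. The substantive point is condition (2): the surjectivity of $\Delta$ on $C^\infty(\mathbb Z_p,k)$ is a genuinely characteristic-$p$ phenomenon, which fails over $\mathbb C$ and rests on $p$ being zero in $k$; for ramified $F$ one additionally needs the splitting-off of the $\mathbb Z_p$-direction together with \cref{zprep}.
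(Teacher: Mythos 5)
Your argument for condition (1) is a coordinate version of what the paper does; the paper works directly with supports inside $BsU_0t^m$ and the containment $BsU_0t^{-1}\subseteq BsU_0$, while you phrase the same thing as $\supp\phi_f\subseteq\varpi^{-n}\mathcal O$. Both are correct, and both in fact produce $\Ind_B^{BsB}\chi=\sum_n t^{-n}W_\chi$ rather than $\sum_n t^nW_\chi$; this sign mismatch with \cref{vandef} is present in the paper's own proof as well, and your parenthetical remark flagging it is warranted.

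For condition (2) you take a genuinely different route, and it is the better one. The paper argues via \cref{sequ}: it asserts the existence of a sequence of functions $\phi_n:\mathcal O/\mathfrak m\to k$ with $\phi_0=1$ and $\phi_{n+1}(x+1)-\phi_{n+1}(x)=\phi_n(x)$ for \emph{all} $n\in\mathbb N$, and sets $f_n(x)=\phi_n(\bar x)f(x)$. But no such infinite sequence can exist: on functions $\mathcal O/\mathfrak m\to k$ the shift by $1$ has order $p$, so $\Delta^p=([1]-\mathrm{id})^p=[1]^p-\mathrm{id}=0$, whereas the relations force $\Delta^p\phi_p=\phi_0=1\neq 0$. (Replacing $\mathcal O/\mathfrak m$ by $\overline{\mathbb F}_p$ does not help; translation by $1$ still has order $p$.) Your approach sidesteps this entirely: you split $\mathcal O\cong\mathbb Z_p\oplus\mathbb Z_p^{\,d-1}$ using a $\mathbb Z_p$-basis starting with $1$, factor $W_\chi\cong C^\infty(\mathbb Z_p,k)\otimes C^\infty(\mathbb Z_p^{\,d-1},k)$, invoke \cref{zprep} to reduce to the first factor, and prove $\Delta C^\infty(\mathbb Z_p,k)=C^\infty(\mathbb Z_p,k)$ directly by the augmentation-ideal observation that $\sum_{x\in\mathbb Z/p^{N+1}}\psi(x)=p\sum_{y\in\mathbb Z/p^N}\psi(y)=0$ in characteristic $p$. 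This is exactly the characteristic-$p$ input the paper's construction needs but deploys at the wrong place (functions on $\mathbb Z_p$ at increasing depth, not on the fixed residue field $\mathcal O/\mathfrak m$); had the paper taken $\phi_n$ to be locally constant on $\mathbb Z_p$ and composed with the projection $\mathcal O\to\mathbb Z_p$ along your splitting, its argument via \cref{sequ} would have worked. As written, your proof is the one that is correct; the conclusion $\Delta W_\chi=W_\chi$ is of course true, and your argument establishes it.
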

      \begin{proof}
          \begin{enumerate}
              \item Observe that
              \[
                s \begin{pmatrix} 1 & x \\ 0 & 1 \end{pmatrix} t^{-1}
                = \begin{pmatrix} 0 & 1 \\ \frac{1}{\varpi} & x \end{pmatrix}
                = \begin{pmatrix} 1 & 0 \\ 0 & \frac{1}{\varpi} \end{pmatrix}
                  s \begin{pmatrix} 1 & \varpi x \\ 0 & 1 \end{pmatrix}
                \in BsU_0,
              \]
              for any $x\in\mathcal{O}$,
              hence
              \begin{equation}\label{t-1-inv}
                BsU_0 t^{-1} \subseteq BsU_0.
              \end{equation}
               Let $f \in \Ind_B^{BsB} \chi$.
             Since $f$ is locally constant, compactly supported 
               modulo $B$ and right-invariant under some 
               compact open subgroup $K_n$ of $G$, it is supported on finitely many double cosets $Bg_i K_n \subseteq BsB$. 
          Thence there exists an $l\geq 1$ such that
      $\cup_{i=1}^l Bg_iK_n\supseteq \supp(f)$.
          
              Since the image of $\cup_{i=1}^l Bg_iK_n$ in 
              $B \backslash G$ is compact and
              \[
                \{ BsU_0 t^k\mid k\geq 0\}
              \]
              forms an open cover of $BsB$, 
              \begin{displaymath}
                  \supp(f)\subseteq \cup_{i=1}^l Bg_iK_n\subseteq \bigcup_{i=0}^m 
                  BsU_0t^i
              \end{displaymath}
      for some $m\geq 0$.

      \par Furthermore, it follows from \cref{t-1-inv} that 
      \[\bigcup_{i=0}^m 
      BsU_0t^i=BsU_0t^m\]
      hence $\supp(f)\subseteq BsU_0t^m$ and 
      $\supp(t^{m}f)=\supp(f)t^{-m}$ is contained in $BsU_0$.
      Therefore, $f \in t^{-m} W_\chi$, completing the proof of (1).
          
              \item It is easy to see that 
              $W_{\chi}$ is infinite-dimensional, stable under the action of $U_1\cong \mathbb{Z}_p$
             , we now show that $\Delta W_\chi=W_\chi$. 
\par Via the natural identification \(B\backslash G\cong \mathbb{P}^1\), 
we may identify $W_\chi=\Ind_B^{BsU_0}\chi$ with the space of locally constant functions on 
$\mathcal{O}$.
              
\par It follows from the elementary algebra that
 there exists a sequence of functions 
 \(\{\phi_n(t):\mathcal{O}/\mathfrak{m}\to\overline{\mathbb{F}}_p|n\in\mathbb{N}\}\) such that 
 $\phi_0(x)=1$
and 
 $\phi_{n+1}(x+1)-\phi_{n+1}(x)=\phi_{n}(x)$ for any $n\in\mathbb{N},x\in \overline{\mathbb{F}}_p$. 
\par For any $f \in W_\chi^{U_1}$,
 define 
\begin{align*}
    f_n: \mathcal{O}\to \overline{\mathbb{F}}_p
    \\ x\mapsto \phi_n(\overline{x})f(x)
\end{align*}
where $\overline{x}$ denotes the image of $x$ under the natural quotient 
\[\mathcal{O}\to \mathcal{O}/\mathfrak{m}\]
By the $\mathbb{Z}_p$-invariance of $f$, the sequence \(\{f_n|n\in\mathbb{N}\}\) satisfies that 
\begin{align*}
    (\Delta f_{n+1})(x)&=f_{n+1}(x+1)-f_{n+1}(x)
   =\phi_{n+1}(\overline{x+1})f(x+1)-\phi_{n+1}(\overline{x})f(x)
   \\ &=(\phi_{n+1}(\overline{x+1})-\phi_{n+1}(\overline{x}))f(x)
    =\phi_n(\overline{x})f(x)=f_n(x)
\end{align*}
this, together with \cref{sequ}, 
completes the proof of (2).\qedhere
            \end{enumerate}
      \end{proof}
      \subsection{On Supersingular Case For $\mathrm{GL}_2(\mathbb{Q}_p)$}
    In this subsection, we fix $F=\mathbb{Q}_p$
and complete the construction of vanishing pair for supersingular representations in $F=\mathbb{Q}_p$ case.  
This construction cannot be carried over directly to general case due to the lack of 
a classification of supersingular representations. 
Particularly the \cref{pro} was built only for $F=\mathbb{Q}_p$ case.

   \par Let 
   \[I_1:=\left(\begin{array}{cc}
    1+\mathfrak{m} & \mathbb{Z}_p \\
    \mathfrak{m} & 1+\mathfrak{m}
    \end{array}\right)\] 
and $(\pi,V)$ be a supersingular representation of $G$. We quote two results about the supersingular representation
\begin{proposition}[{\cite[Theorem 3.2.4, Corollary 4.1.4]{Breuil_2003}}]
    The space $\pi^{I_1}$ of $I_1$-invariant vectors in $V$ is $2$-dimensional over 
$k$.
\end{proposition}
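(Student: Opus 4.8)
This statement is quoted from \cite{Breuil_2003}, so formally it suffices to invoke that reference; I nonetheless indicate the shape of the argument. The plan is to use Breuil's explicit construction of the supersingular representations of $\mathrm{GL}_2(\mathbb{Q}_p)$. Up to twisting by a character $\theta\circ\det$, every supersingular $(\pi,V)$ with central character $\omega$ arises, by the work of Barthel--Livn\'e and Breuil, as a quotient
\[
\pi \;\cong\; \bigl(\ind_{KZ}^{G}\operatorname{Sym}^{r}(k^{2})\bigr)\big/ T ,
\qquad 0\le r\le p-1 ,
\]
where $K=\mathrm{GL}_2(\mathbb{Z}_p)$, $\operatorname{Sym}^{r}(k^{2})$ is inflated to $KZ$ (with $Z$ acting through $\omega$), and $T$ is the Hecke operator generating the spherical Hecke algebra; supersingularity is exactly the condition that $T$ act as $0$ on $\pi$. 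One also uses that $T$ is injective on $M:=\ind_{KZ}^{G}\operatorname{Sym}^{r}(k^{2})$, which holds for $F=\mathbb{Q}_p$, so that $0\to M\xrightarrow{T}M\to\pi\to 0$.

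First I would compute the $I_1$-invariants of the compact induction $M$. In the standard model an element of $M$ is a finitely supported function on the vertices $G/KZ$ of the Bruhat--Tits tree, valued in $\operatorname{Sym}^{r}(k^{2})$ and satisfying the usual equivariance. Using that $I_1$ is pro-$p$ together with an Iwasawa-type decomposition adapted to $I_1$, one checks that an $I_1$-invariant element is supported on the half-line of vertices $g_n=t^{n}KZ$, $n\ge 0$, and that its value at $g_n$ is forced into a one-dimensional subspace of $\operatorname{Sym}^{r}(k^{2})$ — the line fixed by the appropriate unipotent mod $p$, which is a line precisely because $r\le p-1$ (both $(\operatorname{Sym}^{r}k^{2})^{U(\mathbb{F}_p)}$ and $(\operatorname{Sym}^{r}k^{2})^{U^{-}(\mathbb{F}_p)}$ are $1$-dimensional there). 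Hence $M^{I_1}$ is an increasing union of finite-dimensional subspaces indexed by the radius of the support.

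Next I would analyse how $T$ acts on $M^{I_1}$: it pushes the support outward by one vertex, and in natural coordinates on the line of $I_1$-fixed values it acts as a shift-type operator. Tracking this explicitly, one shows that modulo the image of $T$ the only surviving $I_1$-invariants are the classes of the values at the two vertices $g_0$ and $g_1$ (the two ends of the fundamental edge); combined with the easy lower bound — the images of the basis elements $[\mathrm{id},x^{r}]$ and $[s,x^{r}]$ give two linearly independent $I_1$-fixed vectors of $\pi$ — this yields $\dim_k\pi^{I_1}=2$.

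The main obstacle is the upper bound $\dim_k\pi^{I_1}\le 2$: one must verify that killing $T$ really collapses the infinite-dimensional space $M^{I_1}$ onto a two-dimensional quotient, with no $I_1$-invariant of $\pi$ secretly requiring lifts of large support, and, relatedly, that every $I_1$-invariant of $\pi$ does lift into $M^{I_1}$ (an $H^1(I_1,-)$-type point, handled here by the explicit nature of the tree model). This is exactly where Breuil's coordinate computation of the action of $T$, together with the irreducibility of $\pi$, is used. A cleaner but essentially equivalent route is to pass to the pro-$p$ Iwahori--Hecke algebra $\mathcal{H}$: the functor $V\mapsto V^{I_1}$ carries $\pi$ to a simple supersingular $\mathcal{H}$-module, and for $\mathrm{GL}_2(\mathbb{Q}_p)$ every such module is $2$-dimensional by Vign\'eras's classification — though this only relocates the work to another reference.
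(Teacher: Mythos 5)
The paper offers no proof of this proposition at all: it is stated as a quoted result, with the citation \cite[Theorem 3.2.4, Corollary 4.1.4]{Breuil_2003} in the theorem header and no proof environment following. Your proposal correctly treats the statement the same way — invoking Breuil — and your outline of Breuil's argument (compact induction model $\ind_{KZ}^{G}\operatorname{Sym}^{r}(k^{2})$, injectivity of $T$, explicit computation of $I_1$-invariants on the tree, and the alternative via Vign\'eras's classification of simple supersingular modules over the pro-$p$ Iwahori--Hecke algebra) is a fair summary of what lies behind the reference, so there is nothing to correct.
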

\begin{theorem}[{\cite[Theorem 1.1]{paskunas2}}]\label{Birre}
The restriction $\pi|_B$ is an irreducible representation of $B$.
\end{theorem}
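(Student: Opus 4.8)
This is a theorem of Paškūnas, which we use as a black box; I only sketch the shape of the argument and flag the delicate point. The plan is first to pass to a concrete model. By the classification of Barthel--Livn\'e and the explicit analysis of Breuil, every supersingular representation $\pi$ of $G=\mathrm{GL}_2(\mathbb{Q}_p)$ over $k$ becomes, after twisting by a smooth character of $\det$, isomorphic to $\pi_r:=\ind_{KZ}^{G}(\mathrm{Sym}^r k^2)/(T)$ with $K=\mathrm{GL}_2(\mathbb{Z}_p)$, $0\le r\le p-1$ and $T$ the spherical Hecke operator; since twisting by a character of $\det$ does not change the poset of $B$-subrepresentations, we may assume $\pi=\pi_r$. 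As a first, easy observation: since $\pi$ is supersingular it is not a subquotient of any $\operatorname{Ind}_B^G\chi$, so $\Hom_B(\pi,\chi)=\Hom_G(\pi,\operatorname{Ind}_B^G\chi)=0$ for every character $\chi$ of $T$; thus $\pi$ has no nonzero $B$-quotient isomorphic to an inflated character, which already rules out $B$-subrepresentations $W$ with $\pi/W$ of character type --- but the general case needs more.

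The real work is to exclude a \emph{nonzero} proper $B$-subrepresentation $W$. For this I would invoke Breuil's explicit description of $\pi_r$ along the geodesic half-line of the Bruhat--Tits tree fixed by $U_0$: it provides a basis of $\pi_r$ with explicit formulas for the actions of $t$, of $U_0$, and of the Iwahori subgroup, and in particular an exhausting increasing chain of finite-dimensional subspaces $R_0\subseteq R_1\subseteq\cdots$, each $R_m$ stable under $\left(\begin{smallmatrix}\mathbb{Z}_p & \mathbb{Z}_p\\ 0 & 1\end{smallmatrix}\right)$, with $tR_m\subseteq R_{m+1}$, with $\bigcup_m R_m=\pi_r$, and with $R_0$ the two-dimensional space $\pi_r^{I_1}$ of the preceding proposition. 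Granting this, I would argue in three steps: (i) given $0\ne w\in W$, use the explicit $U$- and $t$-actions to ``slide $w$ back down the half-line'' --- apply suitable powers of $t$ together with finite $U$-translates --- until reaching a nonzero element of $R_0$, so that $W\cap R_0\ne 0$; (ii) the action of the Iwahori subgroup on the two-dimensional module $R_0=\pi_r^{I_1}$ (equivalently, the structure of $\pi_r^{I_1}$ as a module over the pro-$p$ Iwahori--Hecke algebra, which for supersingular $\pi_r$ over $\mathbb{Q}_p$ is the standard $2$-dimensional ``supersingular'' module) is irreducible enough to upgrade $W\cap R_0\ne 0$ to $R_0\subseteq W$; (iii) since $W$ is $t$-stable, $W\supseteq\bigcup_m t^mR_0\supseteq\bigcup_m R_m=\pi_r$, whence $W=\pi_r$.

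I expect step (i) to be the crux, and it is exactly where the hypothesis $F=\mathbb{Q}_p$ enters: it amounts to controlling the action of the full unipotent radical $U$ on $\pi_r$ --- equivalently, to the irreducibility of the mod $p$ ``Kirillov-type'' model of a supersingular representation --- and I see no shortcut around Breuil's fine tree computation, or, in more conceptual terms, around Colmez's identification of $\pi_r$ restricted to the mirabolic subgroup $P=\left(\begin{smallmatrix}\mathbb{Q}_p^\times & \mathbb{Q}_p\\ 0 & 1\end{smallmatrix}\right)$ with the data of the \emph{irreducible} $(\varphi,\Gamma)$-module attached to $\pi_r$ by the $p$-adic local Langlands correspondence, from which a nonzero $B$-stable $W$ would produce a nonzero proper $(\varphi,\Gamma)$-submodule, a contradiction. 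For $[F:\mathbb{Q}_p]\ge 2$ the analogous spaces $R_m$ are infinite-dimensional, step (i) genuinely fails, and $\pi|_B$ then has infinitely many subrepresentations; this is precisely why the vanishing pair for supersingular representations in \cref{cons} is constructed only when $F=\mathbb{Q}_p$.
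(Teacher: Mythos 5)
The paper does not prove this statement: it is quoted verbatim as \cite[Theorem 1.1]{paskunas2} and used as a black box, so there is no ``paper's own proof'' to compare against. You correctly recognize this and present a plausibility sketch rather than a proof, which is the right instinct. Your reduction to $\pi_r$ and the preliminary observation that $\Hom_B(\pi,\chi)=0$ via Frobenius reciprocity are fine, and your closing remark --- that the $F=\mathbb{Q}_p$ hypothesis is what makes the ``Kirillov/half-line'' analysis tractable, in parallel with the fact that \cref{pro} is only available for $\mathbb{Q}_p$ --- is a genuinely useful observation that matches the role \cref{Birre} plays in \cref{ss}.

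However, step (iii) of your sketch is wrong as written. You posit $tR_m\subseteq R_{m+1}$ and then claim
\[
W\supseteq\bigcup_m t^m R_0\supseteq\bigcup_m R_m=\pi_r,
\]
but $tR_m\subseteq R_{m+1}$ gives exactly the \emph{opposite} inclusion: iterating yields $t^m R_0\subseteq R_m$, so $\bigcup_m t^m R_0\subseteq\bigcup_m R_m$, which says nothing. Moreover each $t^m R_0$ is at most two-dimensional, so the union of these translates is nowhere near exhausting the increasing finite-dimensional $R_m$. What you actually need at this point is that the full $B$-span $\langle B\cdot R_0\rangle$ equals $\pi_r$, i.e.\ that the $U$-translates (not just the $t$-translates) of $\pi_r^{I_1}$ fill up the representation; this is precisely the content of Breuil's explicit tree computation that you defer in step (i), and it cannot be recovered from $t$-stability alone. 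Note also that in the paper's logic the identity $\pi=\langle B M_\pi\rangle$ is \emph{deduced} from \cref{Birre} in the proof of \cref{ss}, so invoking it inside a proof of \cref{Birre} would be circular; one must establish the generation statement independently from the explicit model.
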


Denote
\[
    M_{\pi}:=\langle\begin{pmatrix} p^{\mathbb{N}} & \mathbb{Z}_p
        \\ 0 & 1
    \end{pmatrix}
        \pi^{I_1}\rangle
\]

\begin{proposition}[{\cite[Lemma 4.6, Proposition 4.7, Lemma 4.8, Lemma 4.10]{paskunas1}}]\label{pro}
There exists a basis $v_{\sigma},v_{\tilde\sigma}$ of $\pi^{I_1}$ such that
\begin{displaymath}
    M_\sigma:=\left\langle\left(\begin{array}{cc}
        p^{2 \mathbb{N}} & \mathbb{Z}_p \\
        0 & 1
        \end{array}\right) v_\sigma\right\rangle,
        M_{\tilde{\sigma}}:=\left\langle\left(\begin{array}{cc}
            p^{2 \mathbb{N}} & \mathbb{Z}_p \\
            0 & 1
            \end{array}\right) v_{\tilde{\sigma}}\right\rangle 
\end{displaymath}
satisfies that
\begin{enumerate}
        \item $M,M_{\sigma},M_{\tilde{\sigma}}$ are all stable under $U_0$.
        \item $M_\sigma^{U_0}$ and $M_{\tilde{\sigma}}^{U_0}$ are both of dimension $1$ over $k$.
        \item $tM_{\sigma}\subseteq M_{\tilde{\sigma}}$, $tM_{\tilde{\sigma}}\subseteq M_{\sigma}$
        \item $M_{\pi}=M_\sigma\oplus M_{\tilde{\sigma}}$
    \end{enumerate}
\end{proposition}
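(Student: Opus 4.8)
The plan is to deduce all four assertions (and the existence of the distinguished basis) from Breuil's explicit model of a supersingular representation of $\GL_2(\mathbb{Q}_p)$; since this is essentially the content of \cite{paskunas1}, resting on \cite{Breuil_2003}, I would in practice simply cite those, but here is the shape of the argument. Recall that, up to a twist by a character of $\det$, one has $\pi\cong(\ind_{KZ}^G\mathrm{Sym}^r k^2)/T$ for a unique $0\le r\le p-1$, with $K=\GL_2(\mathbb{Z}_p)$ and $T$ the standard spherical Hecke operator, and that $\pi$ carries a distinguished $k$-basis, indexed along a geodesic of the Bruhat--Tits tree, on which the actions of $K$ and of $t$ are given by closed formulas. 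In particular the $K$-socle of $\pi$ is $\sigma\oplus\tilde\sigma$ with $\sigma=\mathrm{Sym}^r$ and $\tilde\sigma=\mathrm{Sym}^{p-1-r}\otimes\det^r$; since $\sigma^{I_1}$ and $\tilde\sigma^{I_1}$ are one-dimensional, the quoted equality $\dim_k\pi^{I_1}=2$ refines to $\pi^{I_1}=\sigma^{I_1}\oplus\tilde\sigma^{I_1}$, and (because $\pi$ is supersingular) the two lines are distinguished by the action of the diagonal torus of $\GL_2(\mathbb{F}_p)$. I would take $v_\sigma$ and $v_{\tilde\sigma}$ to span $\sigma^{I_1}$ and $\tilde\sigma^{I_1}$; this pins down the basis of $\pi^{I_1}$ up to scalars.

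Assertion (1) I would handle first, and it is elementary: since $F=\mathbb{Q}_p$ we have $U_0=U_1$, and a direct coset computation gives $\bigl(\begin{smallmatrix} p^{a\mathbb{N}} & \mathbb{Z}_p\\ 0 & 1\end{smallmatrix}\bigr)=U_0\cdot\{t^{an}:n\in\mathbb{N}\}$ together with $t^{an}U_0 t^{-an}\subseteq U_0$ for $a\in\{1,2\}$; hence $U_0\cdot\bigl(\begin{smallmatrix} p^{a\mathbb{N}} & \mathbb{Z}_p\\ 0 & 1\end{smallmatrix}\bigr)=\bigl(\begin{smallmatrix} p^{a\mathbb{N}} & \mathbb{Z}_p\\ 0 & 1\end{smallmatrix}\bigr)$, and since $M_\pi$, $M_\sigma$, $M_{\tilde\sigma}$ are by definition $k$-spans of $U_0$-stable subsets they are $U_0$-stable.

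The substance is (2)--(4), which I would obtain by tracking the $t$- and $U_0$-actions through Breuil's model. The engine is a ping-pong: $t$ carries $v_\sigma$ into the subspace generated from $v_{\tilde\sigma}$ and conversely, so that $tv_\sigma\in M_{\tilde\sigma}$ and $tv_{\tilde\sigma}\in M_\sigma$; combining this with $t^{2n}U_0\subseteq\bigl(\begin{smallmatrix} p^{2\mathbb{N}} & \mathbb{Z}_p\\ 0 & 1\end{smallmatrix}\bigr)$ and the $U_0$-stability from (1) yields $tM_\sigma\subseteq M_{\tilde\sigma}$ and $tM_{\tilde\sigma}\subseteq M_\sigma$, which is (3). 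For (4), write $\bigl(\begin{smallmatrix} p^{\mathbb{N}} & \mathbb{Z}_p\\ 0 & 1\end{smallmatrix}\bigr)=U_0\cdot\{t^{n}:n\in\mathbb{N}\}$ and separate even from odd $n$: the even part applied to $v_\sigma$, resp.\ to $v_{\tilde\sigma}$, spans $M_\sigma$, resp.\ $M_{\tilde\sigma}$, while (3) places the odd part into the opposite summand, so $M_\pi=M_\sigma+M_{\tilde\sigma}$; directness, and the fact that $M_\sigma$ and $M_{\tilde\sigma}$ have one-dimensional $U_0$-invariants (which is (2)), are the two remaining points, and both fall out of Breuil's basis together with the one-dimensionality of $\sigma^{U_0}$ and $\tilde\sigma^{U_0}$ — concretely, $M_\sigma$ and $M_{\tilde\sigma}$ are spanned by complementary families of standard basis vectors, and each $U_0$-invariant computation collapses onto a single line.

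The step I expect to be the main obstacle — and, as the surrounding text already flags, the reason the whole construction is confined to $F=\mathbb{Q}_p$ — is precisely this last one: establishing (2)--(4) genuinely requires Breuil's classification and explicit model of supersingular representations, and for $\GL_2(F)$ with $F\neq\mathbb{Q}_p$ no such description is available, so none of (2)--(4) can be reached this way. Accordingly the efficient course is to invoke \cite{Breuil_2003} for the model and \cite{paskunas1} for the (essentially combinatorial) verification of (1)--(4).
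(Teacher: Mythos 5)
The paper gives no proof of this proposition: it is cited directly from \cite{paskunas1} (Lemma 4.6, Proposition 4.7, Lemma 4.8, Lemma 4.10), resting on Breuil's explicit model from \cite{Breuil_2003}. Your proposal correctly identifies those as the ingredients, gives an accurate sketch of how (1)--(4) are extracted from the Breuil--Paškūnas combinatorics, and concludes---as the paper does---that the efficient course is simply to cite them; so this is essentially the same approach.
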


\begin{proposition}\label{ss}
    Every supersingular representation of $G$ is a vanishing representation of 
    $B^+$.
\end{proposition}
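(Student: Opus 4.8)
The plan is to show that $(\pi, M_\pi)$ is a vanishing pair for every supersingular $\pi$, i.e.\ to verify the two conditions of \cref{vandef} with $W := M_\pi$ (which is $U_1 = U_0$-stable by \cref{pro}(1), since $F = \mathbb{Q}_p$, so it is an admissible candidate). Two auxiliary facts about $B$ will be used throughout. First, set $H := \left\{ \begin{pmatrix} u & 0 \\ 0 & 1 \end{pmatrix} \mid u \in \mathbb{Z}_p^* \right\}$; one checks directly that $H$ normalizes $I_1$, normalizes the submonoid $\begin{pmatrix} p^{\mathbb{N}} & \mathbb{Z}_p \\ 0 & 1 \end{pmatrix}$, and commutes with $t$, so that $H \cdot \pi^{I_1} = \pi^{I_1}$ and $H \cdot M_\pi = M_\pi$. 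Second, $\langle U_0, t, t^{-1} \rangle = B^+$ (indeed $U = \bigcup_{m \geq 0} t^{-m} U_0 t^m$ and $B^+ = U t^{\mathbb{Z}}$) while $\langle B^+, Z, H \rangle = B$; consequently any subspace of $\pi$ stable under $U_0$, $t$, $t^{-1}$, $Z$ and $H$ is a $B$-subrepresentation, hence — by the irreducibility of $\pi|_B$ in \cref{Birre} — is either $0$ or all of $\pi$. We also use that $\pi$ is infinite-dimensional, that $Z$ acts by a central character, and that $0 \neq \pi^{I_1} \subseteq M_\pi$.

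The one genuinely non-formal point is that $M_\sigma$ and $M_{\tilde\sigma}$ are infinite-dimensional, and I would prove this by contradiction. If $M_\sigma$ were finite-dimensional, then since $t$ acts invertibly on $\pi$ and $t M_{\tilde\sigma} \subseteq M_\sigma$ by \cref{pro}(3), $M_{\tilde\sigma}$ would be finite-dimensional too, hence so would $M_\pi = M_\sigma \oplus M_{\tilde\sigma}$ by \cref{pro}(4); using \cref{pro}(3) again, $t M_\pi \subseteq M_\pi$, and finiteness plus injectivity of $t$ would force $t M_\pi = M_\pi$, making $M_\pi$ stable under $U_0$ and $t^{\pm 1}$, hence under $B^+$, hence (by the second fact above) a nonzero $B$-subrepresentation, hence $M_\pi = \pi$ by \cref{Birre} — contradicting $\dim_k \pi = \infty$. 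Granting infinite-dimensionality, condition (2) of \cref{vandef} is immediate: by \cref{pro}(1) and (4) the decomposition $M_\pi = M_\sigma \oplus M_{\tilde\sigma}$ is one of $U_1 \cong \mathbb{Z}_p$-representations, so $\Delta$ respects it, and by \cref{pro}(2) each summand has one-dimensional $\mathbb{Z}_p$-invariants; \cref{trad} applied to the pair $(M_\sigma, M_{\tilde\sigma})$ then gives $M_\pi \subseteq \Delta M_\pi$, and the reverse inclusion is automatic, so $\Delta M_\pi = M_\pi$.

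For condition (1) I would put $N := \sum_{n \geq 0} t^{-n} M_\pi$ and show $N = \pi$. The space $N$ is $t^{-1}$-stable trivially and $t$-stable because $t M_\pi \subseteq M_\pi$ (\cref{pro}(3)); it is $U_0$-stable because for $u_0 \in U_0$ and $n \geq 0$ one has $t^n u_0 t^{-n} \in U_0$, whence $u_0 \cdot t^{-n} M_\pi = t^{-n} \cdot (t^n u_0 t^{-n}) M_\pi \subseteq t^{-n} M_\pi$ by \cref{pro}(1); since $U = \bigcup_m t^{-m} U_0 t^m$ and $B^+ = U t^{\mathbb{Z}}$, this makes $N$ stable under all of $B^+$, and then the first auxiliary fact makes it stable under $Z$ and $H$ as well. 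Thus $N$ is a nonzero $B$-subrepresentation, so $N = \pi$ by \cref{Birre}; this is precisely condition (1), so $(\pi, M_\pi)$ is a vanishing pair and $\pi$ is a vanishing representation of $B^+$.

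The main obstacle is the passage from the \emph{monoid}-invariance of $M_\pi$ — all that its definition directly provides — to statements about the full group $B^+$ and about $\pi$ itself: both the equality $\pi = \sum_{n} t^{-n} M_\pi$ and the infinite-dimensionality of $M_\sigma, M_{\tilde\sigma}$ rest on combining the precise structure recorded in \cref{pro} (the $t$-swap $M_\sigma \leftrightarrow M_{\tilde\sigma}$, the $U_0$-stability, the one-dimensional invariants, the direct-sum decomposition) with the invertibility of $t$ and the irreducibility of $\pi|_B$ from \cref{Birre}. Once \cref{pro}, \cref{trad} and \cref{Birre} are in hand the remaining arguments are formal, and in particular require no further input from the classification of supersingular representations beyond what enters \cref{pro}; this is exactly why the construction here is limited to $F = \mathbb{Q}_p$.
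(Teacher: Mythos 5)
Your proof is correct and follows the same overall strategy as the paper: take $W = M_\pi$, deduce condition (2) of \cref{vandef} from \cref{pro} and \cref{trad}, and deduce condition (1) from the $B$-irreducibility of $\pi$ (\cref{Birre}). You do, however, fill two genuine gaps in the paper's write-up. \cref{trad} requires its inputs to be infinite-dimensional, and the paper applies it to $M_\sigma$, $M_{\tilde\sigma}$ without verifying this; your contradiction argument (finite-dimensionality would force $tM_\pi=M_\pi$, making $M_\pi$ a nonzero $B$-subrepresentation and hence, by \cref{Birre}, all of $\pi$, contradicting $\dim_k\pi=\infty$) supplies the missing check. For condition (1), the paper asserts the decomposition $B=\{t^n\mid n\in\mathbb{N}\}\,Z\,\begin{pmatrix} p^{\mathbb{N}} & \mathbb{Z}_p \\ 0 & 1\end{pmatrix}$; since $t$ already lies in that monoid and $Z$ is central, this product set equals $Z\begin{pmatrix} p^{\mathbb{N}} & \mathbb{Z}_p \\ 0 & 1\end{pmatrix}$, which is a proper subset of $B$, and even after correcting the sign on the powers of $t$ one still has to absorb the units $\begin{pmatrix}\mathbb{Z}_p^* & 0\\ 0&1\end{pmatrix}$, which the paper leaves implicit. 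Your route --- showing directly that $N:=\sum_{n\ge0}t^{-n}M_\pi$ is stable under $U_0$, $t^{\pm1}$, $Z$ and $H=\begin{pmatrix}\mathbb{Z}_p^* & 0\\ 0&1\end{pmatrix}$ (the last because $H$ normalizes both $I_1$ and the monoid), hence is a nonzero $B$-subrepresentation and therefore all of $\pi$ --- is cleaner and correct. Finally, both your argument and the paper's (and the paper's proof of \cref{prin}) actually establish $\pi=\sum_{n\in\mathbb{N}}t^{-n}W$ rather than the $\sum_{n\in\mathbb{N}}t^{n}W$ written in \cref{vandef}; this is a sign typo in the definition, not a flaw in your proof.
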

\begin{proof}
    Let $\pi$ be an arbitrary  supersingular 
    representation of $G$, it suffices to show that $(\pi,M_{\pi})$ is a vanishing pair.
    (2) follows directly from \cref{pro} and \cref{trad}
    , thus suffices to prove (1) here.
\par It follows from \cref{Birre} that $\pi=\langle BM_\pi\rangle$.
Since $B=\{t^{n}|n\in\mathbb{N}\}Z\begin{pmatrix} p^{\mathbb{N}} & \mathbb{Z}_p
    \\ 0 & 1
\end{pmatrix}
$, $M_\pi$ is by definition stable under the action of $\begin{pmatrix} p^{\mathbb{N}} & \mathbb{Z}_p
    \\ 0 & 1
\end{pmatrix}$
and all elements of $Z$ act as scalar multiple on $\pi$, 
we conclude that
\begin{displaymath}
\pi=\langle\{t^{n}|n\in\mathbb{N}\}M_\pi\rangle
=\langle t^{n}M_\pi|n\in\mathbb{N}\rangle=
\sum_{n\in\mathbb{N}}t^n M_\pi.
\end{displaymath}
\end{proof}
The \cref{ssB} follows immediately from \cref{ss} and the vanishing lemma.
\end{section}

\begin{section}{The Proof of \cref{gens}}\label{modpmain} 

    Since both principal series and 
    special series are quotients of $\Ind_B^G\chi$ for some character
     $\chi: T \to k^*$, \cref{gens} follows
     directly
    from the following proposition.

\begin{proposition}\label{strong}
    Let $n \in \mathbb{Z}_{>0}$, 
    $\chi_i: T \to k^*$ be a character 
    and $\pi_i:=\Ind_B^G\chi_i$ for each $1\leq i\leq n$.
     Then
    \[
    \Hom_G\left( \bigotimes_{i=1}^n \pi_i, \chi \right) = 0
    \]
    holds for any character $\chi : T \to k^*$.
\end{proposition}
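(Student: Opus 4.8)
The plan is to reduce from $G$ to $B^+$ by using the tensor-compatibility of the principal-series induction, then invoke the Vanishing Lemma (\cref{van}) together with the construction of \cref{prin}. The key observation is that any $G$-invariant (indeed any $B^+$-invariant) form on $\bigotimes_{i=1}^n \pi_i$ is determined by its restriction to a tensor product in which \emph{one} factor is replaced by the vanishing subrepresentation $W_{\chi_1} = \Ind_B^{BsU_0}\chi_1$ inside $\pi_1 = \Ind_B^G\chi_1$; once restricted there, \cref{zprep} forces it to vanish.

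Concretely, first I would note that $\chi$, viewed as a character of $T$, need not extend to $G$, but this is irrelevant: the statement is about $\Hom_G$, and restricting a $G$-map to $B^+$ is harmless since $\chi$ is in particular a character of $T \supset Z \cap B^+$ and $B^+$ normalizes nothing problematic — more precisely, I only need that $\chi|_{B^+}$ makes sense as a character $B^+ \to k^*$ (it factors through $B^+ \twoheadrightarrow T/U \cong \varpi^{\mathbb Z} \times \dots$, or rather through the diagonal part), so there is an inclusion $\Hom_G(\bigotimes \pi_i, \chi) \hookrightarrow \Hom_{B^+}(\bigotimes \pi_i, \chi|_{B^+})$. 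Then, by \cref{prin}, the pair $(\Ind_B^{BsB}\chi_1,\, W_{\chi_1})$ is a vanishing pair; since $\Ind_B^{BsB}\chi_1$ is a $B^+$-subrepresentation of $\pi_1|_{B^+} = \Ind_B^G\chi_1$ (as $BsB$ is a $B$-stable, hence $B^+$-stable, open subset of $G$ and $B^+ \subseteq B$), we may set $\pi_1' := \Ind_B^{BsB}\chi_1 \subseteq \pi_1$, which is itself a vanishing representation of $B^+$ with vanishing subspace $W_{\chi_1}$.

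The heart of the argument is the same determinacy trick as in the proof of the Vanishing Lemma, now with $n$ factors. Write $V := W_{\chi_1} \otimes \pi_2 \otimes \cdots \otimes \pi_n$ as a $U_1$-representation. Since $\Delta W_{\chi_1} = W_{\chi_1}$, \cref{zprep} (applied with $V_1 = W_{\chi_1}$ and $V_2 = \pi_2 \otimes \cdots \otimes \pi_n$, the latter smooth over $U_1$) gives $\Delta V = V$ and hence $\Hom_{U_1}(V, \mathbbm 1) = 0$. Now I claim the restriction map
\[
\Hom_{B^+}\!\Bigl(\pi_1' \otimes \bigotimes_{i=2}^n \pi_i,\ \chi|_{B^+}\Bigr) \longrightarrow \Hom_{U_1}\!\Bigl(W_{\chi_1} \otimes \bigotimes_{i=2}^n \pi_i,\ \mathbbm 1\Bigr)
\]
is injective: because $\pi_1' = \sum_{j \in \mathbb N} t^j W_{\chi_1}$, the tensor product $\pi_1' \otimes \bigotimes_{i\ge 2}\pi_i = \sum_{j} t^j\bigl(W_{\chi_1}\otimes \bigotimes_{i\ge 2}\pi_i\bigr)$ as a $B^+$-representation (here $t$ acts diagonally on all factors, and $\chi(t^j)$ is a scalar), so any $\phi$ in the source is determined on all of $\pi_1' \otimes \bigotimes \pi_i$ by its values on $W_{\chi_1}\otimes\bigotimes\pi_i$, exactly as in \cref{van}. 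Combining injectivity with the vanishing of the target gives $\Hom_{B^+}(\pi_1' \otimes \bigotimes_{i\ge2}\pi_i, \chi|_{B^+}) = 0$, whence $\Hom_{B^+}(\bigotimes_{i=1}^n \pi_i, \chi|_{B^+})$ restricted to the subspace $\pi_1'\otimes\bigotimes_{i\ge 2}\pi_i$ is zero; finally one upgrades this to the vanishing of the whole $\Hom_G$ by noting that a $G$-invariant form on $\bigotimes \pi_i$ that kills the $G$-subrepresentation generated by $\pi_1' \otimes \bigotimes_{i\ge 2}\pi_i$ must be zero, since $\langle G \cdot \pi_1'\rangle = \pi_1$ (as $BsB$ generates $G$, or simply because $\pi_1$ is admissible and $\pi_1'$ is nonzero while $\pi_1$ restricted to $B$ has finite length — more robustly, $\pi_1 = \Ind_B^G\chi_1$ is generated over $G$ by any $\Ind_B^{\Omega}\chi_1$ with $\Omega$ a nonempty open union of $B$-cosets).

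The main obstacle I anticipate is the very last step — promoting $B^+$-vanishing on the subspace to $G$-vanishing on the whole tensor product — together with the bookkeeping of how $\chi$ restricts to $B^+$ and whether one really wants $\Hom_G$ or can get away with the a priori weaker $\Hom_{B^+}$. In fact the cleanest route avoids the promotion entirely: since $\Hom_G(\bigotimes \pi_i,\chi) \subseteq \Hom_{B^+}(\bigotimes\pi_i,\chi|_{B^+})$ and the latter injects (via restriction to $\pi_1'\otimes\bigotimes_{i\ge2}\pi_i$, using that $\pi_1 = \sum_{k\ge 0} t^{-k}\pi_1'$ because $\pi_1'=\Ind_B^{BsB}\chi_1$ and $B = BsB \sqcup B$ with the $B$-part absorbed — wait, this needs $\pi_1 = \sum_k t^{-k}\Ind_B^{BsB}\chi_1 + \Ind_B^{B}\chi_1$, so one must also handle the $\Ind_B^B\chi_1$ summand, which is the line $B\backslash B$, a single point, contributing a finite-dimensional piece) into $\Hom_{B^+}(\pi_1'\otimes\bigotimes\pi_i, \chi|_{B^+}) \oplus (\text{finite part})$. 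So I would first reduce, exactly as in the proof of \cref{ss} and \cref{prin}, to checking that $\pi_1$ itself is a vanishing representation of $B^+$: but $\pi_1 = \Ind_B^G\chi_1 = \Ind_B^{BsB}\chi_1 \oplus \Ind_B^{B}\chi_1$ as $B^+$-spaces, and $\Ind_B^B\chi_1$ is the $1$-dimensional line $k\cdot f_0$ which is $B^+$-stable and on which $U_1$ acts trivially, so it is \emph{not} absorbed — this shows $\pi_1$ is not literally a vanishing representation in the sense of \cref{vandef}, and one must instead argue directly at the level of forms: restrict $\phi \in \Hom_G(\bigotimes\pi_i,\chi)$ to $\bigotimes \pi_i$, decompose the $\pi_1$-factor, and observe that the $\Ind_B^B\chi_1$-summand can be moved into $\Ind_B^{BsB}\chi_1$ by acting with $s \in G$ (here $G$-invariance, not just $B^+$-invariance, is essential), reducing to the vanishing already established on $W_{\chi_1}\otimes\bigotimes_{i\ge 2}\pi_i$ after a further application of \cref{zprep}. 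So the real subtlety — and the place the proof genuinely uses $\Hom_G$ rather than $\Hom_{B^+}$ — is this last maneuver handling the "bad" open cell; everything else is a direct citation of \cref{zprep}, \cref{prin}, and the determinacy argument of \cref{van}.
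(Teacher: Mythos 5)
Your proposal is correct in spirit but takes a genuinely different route from the paper. The paper proves \cref{strong} by induction on $n$: in the inductive step it uses the Bruhat filtration
\[
0 \to \Ind_B^{BsB}\chi_{m+1}\otimes\bigotimes_{i=1}^{m}\pi_i \to \bigotimes_{i=1}^{m+1}\pi_i \to \chi_{m+1}\otimes\bigotimes_{i=1}^{m}\pi_i \to 0
\]
of $B^+$-modules, kills the subobject by \cref{prin} and the Vanishing Lemma, and kills the quotient by invoking the inductive hypothesis; the base case $n=1$ is handled separately using the non-split extension $0\to\chi_1\to\Ind_B^G\chi_1\to\mathrm{Sp}\otimes\chi_1\to 0$. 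You instead argue directly, with no induction: the Vanishing Lemma shows any $\phi\in\Hom_G(\bigotimes\pi_i,\chi)$ vanishes on $\Ind_B^{BsB}\chi_1\otimes\bigotimes_{i\ge 2}\pi_i$, and then $G$-equivariance forces $\phi=0$ because the $G$-submodule generated by that subspace is everything (concretely $\Ind_B^G\chi_1 = \Ind_B^{BsB}\chi_1 + s\cdot\Ind_B^{BsB}\chi_1$, since the two translates of the big Bruhat cell cover $\mathbb{P}^1$). Both routes correctly identify the real obstruction — that $\Ind_B^G\chi_1$ is \emph{not} a vanishing $B^+$-representation because of the small-cell quotient — which is a point worth emphasizing. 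Your direct argument is arguably cleaner and more uniform (no case split on reducibility of $\Ind_B^G\chi_1$), and it actually sidesteps a subtlety in the paper's inductive step: after factoring through the quotient, one only has a $B^+$-equivariant map on $\chi_{m+1}\otimes\bigotimes_{i=1}^m\pi_i$ (the quotient carries no $G$-action), whereas the inductive hypothesis supplies $\Hom_G$-vanishing; the $G$-orbit observation you make is precisely what closes this gap.

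One factual correction: you write $\pi_1 = \Ind_B^{BsB}\chi_1 \oplus \Ind_B^B\chi_1$ as $B^+$-spaces, but $B$ is not open in $G$ (it has empty interior), so $\Ind_B^B\chi_1 = \{f : \supp(f)\subseteq B\} = 0$ under the paper's definition; the correct structure is a (generally non-split) $B$-module extension $0\to\Ind_B^{BsB}\chi_1\to\Ind_B^G\chi_1\to\chi_1\to 0$ coming from evaluation at $1$. Relatedly, the phrase ``the $\Ind_B^B\chi_1$-summand can be moved into $\Ind_B^{BsB}\chi_1$ by acting with $s$'' does not quite parse, since there is no such summand to move; what actually works, and what you also state, is that $\ker\phi$ is $G$-stable and the $G$-orbit of $\Ind_B^{BsB}\chi_1\otimes\bigotimes_{i\ge2}\pi_i$ is all of $\bigotimes_{i\ge1}\pi_i$. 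With that rephrasing your argument is sound.
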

    \begin{proof}
        We proceed by induction on $n$. 
      \par  \textbf{Base case ($n=1$):} 
      If $\pi_1$ is irreducible, the assertion 
      $\Hom_G(\pi_1,\chi)=0$ is straightforward.
       \par If $\pi_1$ is reducible,
       by assumption and the classification of irreducible admissible representations of $G$, 
       this happens precisely when $\pi_1 = \Ind_B^G\chi_1$ for some character $\chi_1: T \to k^*$ that factors through the determinant map, i.e., $\chi_1 = \theta \circ \det$ for some character $\theta: F^* \to k^*$. 

       In this case, $\Ind_B^G\chi_1$ admits a non-split short exact sequence:
       \[
       0 \to \chi_1 \to \Ind_B^G\chi_1 \to \mathrm{Sp} \otimes \chi_1 \to 0,
       \]
    since $\mathrm{Sp}\otimes \chi_1$ is infinite-dimensional, 
    irreducible and the exact sequence is non-split, the assertion 
    \(\Hom_G(\pi_1,\chi)=\Hom_G(\Ind_B^G\chi_1,\chi)=0\)
    thence follows.
      \par \textbf{Inductive step:} Assume the proposition holds for some $m \geq 1$. Let $\pi_1, \dots, \pi_{m+1}$ be $m+1$ representations satisfying the hypothesis.  
       \par Let $\chi$ be an arbitrary character of $T$. It follows from \cref{prin} and \cref{ssB} that
        \begin{align}\label{sub}
            \Hom_{B^+}\left(  \Ind_B^{BsB}\chi_{m+1}\otimes (\bigotimes_{i=1}^m\pi_i) , \chi \right) = 0.
        \end{align}
        Furthermore, as a $B^+$-representation, there is an isomorphism:
        \begin{align}\label{dec}
            \left( \bigotimes_{i=1}^{m+1} \pi_i \right) 
            \Bigg/ \left( \Ind_B^{BsB}\chi_{m+1}\otimes 
            (\bigotimes_{i=1}^m\pi_i) \right)
             \cong  \chi_{m+1} \otimes \left( \bigotimes_{\substack{1 \leq i \leq m}} \pi_i \right)  .
        \end{align}
        By the induction hypothesis,
       \begin{equation}\label{quo}
        \begin{aligned}
           & \Hom_G\left( \left( \chi_{m+1} \otimes \left( \bigotimes_{\substack{1 \leq i \leq m}} \pi_i \right) \right) , \chi \right) 
           \\ &= \Hom_G\left( \left( \bigotimes_{\substack{1 \leq i \leq m}} \pi_i \right) , \chi_{m+1}^{-1}\chi \right) \\
            &= 0.
        \end{aligned}
    \end{equation}
        The assertion therefore follows directly from the quotient 
        \cref{dec}, together with the vanishing results in \cref{sub} and \cref{quo}.
    \end{proof}
    \end{section}
\printbibliography
\end{document}